\documentclass[a4paper,11pt]{article}
\usepackage{amsmath, amssymb, graphicx, amsthm, fancyhdr, dsfont, algorithm, algpseudocode, hyperref, bm, bbm, fullpage, xcolor}

\usepackage[round]{natbib}
\usepackage{changes}

\newtheorem{theo}{Theorem}[section]

\newtheorem{lem}[theo]{Lemma}

\theoremstyle{definition}
\newtheorem{defn}[theo]{Definition}
\newtheorem{rmk}[theo]{Remark}

\allowdisplaybreaks

\newcommand{\commentout}[1]{}
\long\def\metanote#1#2{{\color{#1}\
\ifmmode\hbox\fi{\sffamily\mdseries\upshape [#2]}\ }}

\newcommand{\ra}{\rightarrow}
\newcommand{\loc}{\text{loc}}
\newcommand{\Ind}{\mathbbm{1}}
\newcommand{\Zm}{\mathbb{Z}}
\newcommand{\Law}{\mathcal{L}}
\newcommand{\Rm}{\mathbb{R}}
\newcommand{\bbP}{\mathbb{P}}
\newcommand{\bbE}{\mathbb{E}}
\newcommand{\bbN}{\mathbb{N}}
\newcommand{\bbR}{\mathbb{R}}

\newcommand{\rmd}{\mathrm{d}}
\newcommand{\expE}{\mathbb{E}}
\newcommand{\Pm}{\mathbb{P}}
\renewcommand{\epsilon}{\varepsilon}

\title{Well-posedness for Wright--Fisher SPDEs with measurable drift}
\author{Jere Koskela \\
	\texttt{jere.koskela@tugraz.at} \\
	\small Institute of Statistics, \\
    \small Graz University of Technology
	\and
	Oliver Tough \\
	\texttt{oliver.kelsey-tough@durham.ac.uk} \\
	\small Department of Mathematical Sciences, \\ 
    \small Durham Univeristy
}
\date{\today}

\begin{document}

\maketitle

\begin{abstract}
We prove weak existence and uniqueness in law of $[0,1]$-valued solutions for a class of scalar stochastic heat equations with measurable drifts $b : [0,1] \to \mathbb{R}$, driven by Wright--Fisher noise.
The only assumptions we require on the drift are Borel-measurability, and the one-way inequalities $-Cu\leq b(u)\leq C(1-u)$ for some $C<\infty$, compatible with the necessary condition $b(1)\leq 0\leq b(0)$. These are vastly more general conditions than were previously available. Our proof relies on a stochastic duality between the solution of the stochastic heat equation, and a voting scheme running along the graph of a branching-coalescing particle system, generalising the class of drifts for which a dual process is available.
Our results and their proof put the regularisation-by-noise result of Barnes, Mytnik and Sun, and its explanation, on a much more general footing.
\end{abstract}

\noindent
\textit{Keywords}: Bernoulli factory, Stochastic duality, Stochastic PDE, Weak uniqueness, Wright--Fisher noise
\vskip 11pt
\noindent
\textit{2020 MSC:} 35R60, 60H15, 60J85, 60J95

\section{Introduction and main result}
We consider the one-dimensional SPDE
\begin{equation}\label{eq:SPDE 1}
\partial_t u = \frac{1}{2} \Delta u + b(u) + \sqrt{\nu u (1 - u)} \dot{W}, 
\end{equation}
defined on a connected spatial domain $E \subseteq \bbR$ with initial condition $u_0 : E \to [0, 1]$ (assumed only to be a measurable function), where $b : [0, 1] \to \Rm$, $\dot{W}$ is space-time white noise, and $\nu > 0$ is a constant. 
We impose one of four different spatial domains and boundary conditions: (i) the whole line, i.e.\ $E=\Rm$, (ii) periodic, i.e.\ $E=\Rm/\Zm$, (iii) $E=[0,1]$ with Dirichlet boundary conditions, $u(0) = u(1) = 0$, or (iv) $E=[0,1]$ with Neumann boundary conditions, $\partial_x u(0) = \partial_x u(1) = 0$.

We are interested in uniqueness in law of solutions to \eqref{eq:SPDE 1} when $b$ is irregular.
Specifically, we will assume that $b:[0,1]\ra \Rm$ is a bounded, measurable function satisfying 
\begin{equation}\label{eq:assumption on tilde b}
    -Cu\leq b(u)\leq C(1-u)\quad\text{for some} \quad C<\infty.
\end{equation}
In order to ensure the well-posedness of $[0,1]$-valued solutions of \eqref{eq:SPDE 1}, it is necessary that $b(0)\geq 0$ and $b(1)\leq 0$, otherwise the solutions could be ``pushed outside of $[0,1]$''. Therefore condition~\eqref{eq:assumption on tilde b} covers almost all drifts for which one could hope to establish the well-posedness of \eqref{eq:SPDE 1}.

We adopt Walsh's theory \cite{MR876085} to define weak solutions of \eqref{eq:SPDE 1} as solutions of the integral equation 
\begin{align}\label{E:MildSol_FKPP}
u_t(x)= \int_{E} p(t,y,x)\,u_0(y)\rmd y &+ \int_0^t\int_{E}p(t-s,y,x)\,b(u_s(y))\,\rmd y\,\rmd s   \notag\\
&+ \int_{E\times [0,t]}p(t-s,y,x)\,
\sqrt{\nu u_s(y) (1-u_s(y))}\,\rmd W(y,s), 
\end{align}
where $p(t,x,y)$  is the transition density of a Brownian motion on $E$ corresponding to our choice of space and boundary condition, i.e.\ (i) Brownian motion on $E=\Rm$, (ii) Brownian motion on $E=\Rm/\Zm$, (iii) Brownian motion on $E=[0,1]$ with instantaneous killing at $0$, or (iv) Brownian motion on $E=[0,1]$ with reflection at $0$.

Our main result is the following.
\begin{theo}\label{theo:main theorem}
Assume that $b$ is Borel measurable and satisfies \eqref{eq:assumption on tilde b}, $\nu>0$, and any one of the boundary conditions (i)-(iv) hold.
Then for any (possibly random) Borel-measurable initial condition $u_0:E\ra [0,1]$, there exists a weak solution of  \eqref{eq:SPDE 1}, which is unique in law.

\end{theo}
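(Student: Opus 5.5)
We propose to prove existence by approximation and uniqueness by duality. The duality is built from a Bernoulli-factory representation of the drift.

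\emph{Existence.} First mollify $b$ to smooth $b_n$ satisfying \eqref{eq:assumption on tilde b} with the same constant $C$, so that $b_n(0)\ge 0\ge b_n(1)$. Classical theory (Shiga, Mueller--Perkins) then gives $[0,1]$-valued solutions $u^n$ of \eqref{E:MildSol_FKPP}. The integrands are bounded uniformly in $n$, so Kolmogorov-type moment estimates give tightness of $u^n$ in $C((0,\infty),C(E))$, with the usual care at $t=0$ when $u_0$ is only measurable.

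The difficulty is passing to the limit in $\int p\, b_n(u^n)$ when $b$ is merely measurable. Our plan is to show that the occupation measure of $u^n_s(y)$ in the $u$-variable is uniformly absolutely continuous on $(0,1)$, i.e. a Krylov-type estimate. This should follow from the nondegenerate noise $\sqrt{\nu u(1-u)}$ in the interior. Near $\{0,1\}$ we instead use the one-sided bounds $|b(u)|\le C\min(u,1-u)$, which make the boundary contribution small.

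An alternative avoids the Krylov estimate. Apply the duality below to each $u^n$, show the dual moments converge as $b_n\to b$, and obtain both existence and identification of the limit through a martingale problem characterised by moments. I would attempt this route first, since it uses the same machinery as uniqueness.

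\emph{Duality and uniqueness.} Write
\[
b(u)=C(1-u)\,\beta(u)-Cu\,\alpha(u), \qquad \alpha,\beta\in[0,1] \text{ measurable}.
\]
This is possible precisely by \eqref{eq:assumption on tilde b}; for example, take $b^+=C(1-u)\beta$ and $b^-=Cu\alpha$. The drift then reads as: at rate $C$ a site either resamples towards $1$ with probability $\beta(u)$, or towards $0$ with probability $\alpha(u)$.

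The dual object is a branching-coalescing Brownian particle system. Particles move as Brownian motions with the boundary behaviour (i)--(iv) and coalesce at rate $\nu$ via the intersection local time of pairs. At rate $C$ each particle branches into a random number of offspring, with a voting rule: a Boolean function of the offspring's types whose success probability equals $\beta(u)$ (respectively $1-\alpha(u)$) when the offspring are i.i.d.\ Bernoulli$(u)$.

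For a general measurable function $f:[0,1]\to[0,1]$, a Bernoulli factory with a finite stopping rule exists only for special $f$. So I would use a randomised, possibly infinite, decision tree: approximate $f$ by Bernstein polynomials, and use the fact that any measurable $f$ is a.e.\ a limit of such polynomials, so that $f(u)$ equals the expectation of an a.s.\ terminating procedure. I expect this step to be the main obstacle, and two questions need settling:
\begin{enumerate}
\item whether exactness is needed at every $u$ or only Lebesgue-a.e.;
\item whether the branching can be chosen with finite expected offspring, so that the dual does not explode.
\end{enumerate}

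Given the dual, one proves
\[
\mathbb{E}\Big[\prod_i u_t(x_i)\Big]=\mathbb{E}\big[\text{vote of the dual started at }(x_i)\text{ evaluated on }u_0\big]
\]
for every solution. To do this, apply It\^o's formula to $\prod_i u_{t-s}$ paired against the dual via mollified test functions, and verify that the generators match, including the coalescence term coming from the quadratic variation $\nu u(1-u)$.

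Two technical points remain. The first is that measurable $b$ appears only through $b(u_s(y))$, so any version-dependence must be checked. The second is that mollification errors in the It\^o step must vanish, which needs $L^2$ control on the number of dual particles; the dual's branching rate $C$ gives exponential moment bounds. Mixed moments then determine the one-dimensional laws of $u_t$, and a standard Markov-property argument, using that the duality holds conditionally for any starting configuration, extends this to finite-dimensional distributions. This gives uniqueness in law, including for random $u_0$ by conditioning.
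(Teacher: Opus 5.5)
Your uniqueness argument rests on an exact Bernoulli-factory dual for measurable $b$, and that dual cannot be built. Take any rule that reads a.s.\ finitely many Bernoulli$(u)$ coins, randomised or not. Its acceptance and rejection probabilities are countable sums of nonnegative terms $w\,u^{k}(1-u)^{n-k}$, hence lower semicontinuous. So its success probability is continuous at every $u$ where it terminates a.s.; this is the necessity half of Keane--O'Brien. If it terminates only for Lebesgue-a.e.\ $u$, the success probability is still continuous at a.e.\ point. That excludes, for example, the indicator of a fat Cantor set, even after modification on a null set, and Bernstein approximation does not get around this.

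Lebesgue-a.e.\ exactness would not suffice anyway. Consider the paper's regularisation-by-noise example $b_1(u)=\mathbbm{1}(u>0)-u$ and $b_2(u)=1-u$. Both satisfy \eqref{eq:assumption on tilde b} with $C=1$ and they differ only at $u=0$. Yet from $u_0\equiv 0$ the function $u\equiv 0$ solves the $b_1$-equation, while $b_2$-solutions have positive mean for $t>0$; by uniqueness these are different laws. In the interior you would need the occupation measure of $u$ not to charge Lebesgue-null sets, which the paper says explicitly it cannot prove.

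The missing idea is to abandon an exact dual for $b$ and prove a \emph{limiting} duality instead. Use the exact duals for continuous, polynomially bounded $\tilde b^m$, and show their expectations converge to the moments of any given solution with drift $b$. The error is at most
$2\mu\,\mathbb{E}\int_0^T\int_0^{T-r}\sum_{i\le I_r(m)}\bigl|(\tilde b-\tilde b^m)(u_s(X^{m,i}_r))\bigr|\,\rmd s\,\rmd r$. It vanishes under two conditions.
\begin{itemize}
\item[(a)] $\tilde b^m\to\tilde b$ almost everywhere with respect to the occupation measure $\nu=\sum_{i=1,2}\sum_{R\ge 1}2^{-R}\,\mathbb{E}\int_0^R\int_{[-R,R]\cap E}\delta_{u^i(t,x)}\,\rmd x\,\rmd t$ of the two solutions being compared. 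Lusin and Tietze supply such continuous approximants; the point is that the approximation is chosen \emph{after} the solutions.
\item[(b)] All the duals are dominated, uniformly in $m$, by the Barnes--Mytnik--Sun branching--coalescing Brownian motion with infinite branching at rate $\mu$, which satisfies $\tilde{\mathbb{E}}\int_0^T\tilde I_r\,\rmd r<\infty$.
\end{itemize}
This also settles your question 2. The Keane--O'Brien offspring law need not be integrable, finite mean is not needed, and the moment bounds on $I_t$ come from fast coming down from infinity via local-time coalescence, not from the branching rate.

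Your existence argument has the same hole. Plan A needs a Krylov-type bound on the occupation measures of $u^n$, uniform in $n$, for degenerate non-Lipschitz noise and merely measurable drift. This is precisely the estimate the paper identifies as out of reach. Your boundary treatment also misreads the hypothesis: \eqref{eq:assumption on tilde b} gives only $b^-(u)\le Cu$ and $b^+(u)\le C(1-u)$, not $|b(u)|\le C\min(u,1-u)$. The example above shows that the value of $b$ at the atom $u=0$ is decisive, not negligible: a mollification with $b_n(0)\to 1$ converges pointwise to $b_2$ and yields the wrong limit equation.

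Plan B is circular as stated. Matching the limit's moments with a limit of dual expressions identifies its law only if a $b$-solution is already known to exist. It does not show that the limit satisfies the mild equation with drift $b(u)$, and with mollifiers converging only Lebesgue-a.e.\ the limit can solve a different equation.

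The paper proceeds in three steps.
\begin{itemize}
\item[(1)] For Baire class-1 $b$ it uses continuous approximants converging \emph{pointwise}. It writes the subsequential limit as a solution of the mild equation with a weak limit $\hat b(s,y)$ of $b^m(u^m)$. The limiting duality then shows that $\int_0^t\int_E p(t-s,y,x)(\hat b(s,y)-b(u_s(y)))\,\rmd y\,\rmd s$ is a continuous finite-variation martingale, hence zero.
\item[(2)] It proves that existence is stable under pointwise limits $b^n\to b$. It builds $\nu$ from the countably many solutions $u^n$ and their limit, approximates each $b^n$ $\nu$-a.e.\ by continuous drifts, and diagonalises.
\item[(3)] It reaches every Borel $b$ by transfinite induction over the Baire hierarchy.
\end{itemize}
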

\begin{rmk}\label{rmk:discontinuity difficulties}
Weak existence is classical for continuous drifts, \cite{shiga:1988}, but the situation is much more difficult for measurable drifts.
This is because (a) the vanishing noise may preclude a classical argument using Girsanov's theorem, and (b) limiting arguments can break down when discontinuities of $b$ prevent us from passing to a limit.
To our knowledge, the only previous result establishing weak existence with any kind of discontinuity is that of \cite{barnesetal:2025}, which proves weak existence for drifts $b$ given by a power series satisfying certain structural assumptions.
In particular, their power series allow for discontinuities only at one of the endpoints $\{0,1\}$, but nowhere else.
The weak existence part of this theorem should therefore be considered one of the key results of the present article, along with uniqueness in law.
\end{rmk}

We have the following from Theorem \ref{theo:main theorem} and the proof of Lemma \ref{lem:ptwise convergence gives solutions}.
\begin{theo}
Suppose that $b^n$ is Borel measurable and satisfies \eqref{eq:assumption on tilde b} for some fixed $C<\infty$, for all $n<\infty$. Suppose also that $b^n\ra b$ pointwise. Let $u^n$ and $u$ be the weak solutions of \eqref{eq:SPDE 1} with drift $b^n$ and $b$, respectively. Then $u^n\ra u$ locally uniformly in distribution as $n\ra\infty$. 
\end{theo}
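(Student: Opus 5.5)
The plan is a standard compactness-plus-identification argument, with Theorem~\ref{theo:main theorem} supplying uniqueness of the limit. Fix the initial condition $u_0$, and let $u^n$ be the weak solutions with drift $b^n$, which exist by Theorem~\ref{theo:main theorem}.

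\textbf{Step 1: tightness.} I will show that $\{u^n\}$ is tight in the space of continuous $[0,1]$-valued functions on $(0,\infty)\times E$ with the topology of locally uniform convergence. For $t>0$ I write $u^n$ through the mild formulation \eqref{E:MildSol_FKPP}. The heat-semigroup term is smooth and does not depend on $n$. For the drift term, the bound \eqref{eq:assumption on tilde b} gives $|b^n|\le C$ uniformly in $n$, so this term is Hölder continuous with constants independent of $n$. For the stochastic integral, the integrand $\sqrt{\nu u(1-u)}\le \sqrt{\nu}/2$ is bounded uniformly. The standard Kolmogorov moment estimates for Walsh integrals (as in \cite{shiga:1988}) then give uniform space-time Hölder bounds on compacts of $(0,\infty)\times E$. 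These bounds yield tightness. If $u_0$ is merely measurable I restrict to $t>0$, which is what ``locally uniformly'' should mean here.

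\textbf{Step 2: identification of limit points.} Take a subsequence along which $u^n\Rightarrow \bar u$, and use Skorokhod's representation to obtain almost sure locally uniform convergence. I then need to show that $\bar u$ solves \eqref{eq:SPDE 1} with drift $b$. I would phrase this through the martingale problem: for every test function $\phi$, the process
\[
\langle u_t,\phi\rangle-\langle u_0,\phi\rangle-\int_0^t\Bigl(\langle u_s,\tfrac12\Delta\phi\rangle+\langle b(u_s),\phi\rangle\Bigr)\rmd s
\]
is a martingale with quadratic variation $\int_0^t\langle \nu u_s(1-u_s),\phi^2\rangle \rmd s$. All terms except the drift pass to the limit by continuity and bounded convergence.

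\textbf{Step 3: the drift term (main obstacle).} Pointwise convergence $b^n\to b$ together with $u^n\to\bar u$ does not give $b^n(u^n)\to b(\bar u)$ when $b$ is discontinuous. This is exactly where I would invoke the argument in the proof of Lemma~\ref{lem:ptwise convergence gives solutions}. My expectation is that the lemma uses the duality: the moments $\bbE\bigl[\prod_i u_t(x_i)\bigr]$ are expressed through a voting scheme on a branching-coalescing system, with vote probabilities determined by $b$ via a Bernoulli-factory-type representation. These representations depend on $b^n$ only through quantities that converge when $b^n\to b$ pointwise, with uniform control coming from the constant $C$ in \eqref{eq:assumption on tilde b}. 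Concretely, I would show that the dual moment formulas for $u^n$ converge to those for the solution $u$ with drift $b$, using dominated convergence over the dual's randomness; the dual has finitely many branchings on compact time intervals, at rate bounded in terms of $C$. Hence the finite-dimensional mixed moments of $u^n$ converge to those of $u$. Since these moments determine the law of a $[0,1]$-valued field, every limit point $\bar u$ has the law of $u$. This route bypasses the drift identification in Step 2 entirely, so Step 2 is needed only if one also wants $\bar u$ to be a solution directly.

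\textbf{Step 4: conclusion.} Every subsequential limit has the law of $u$, the unique-in-law solution from Theorem~\ref{theo:main theorem}, and tightness holds by Step 1. Therefore the whole sequence $u^n$ converges to $u$ locally uniformly in distribution.
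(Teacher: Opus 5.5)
Your Steps 1 and 4 match the paper. Step 3, however, does not work as described, and it cannot replace Step 2.

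First, there is no dual for the drifts in question. A Bernoulli factory exists only for continuous, polynomially bounded $\tilde b$ (Keane--O'Brien), so neither a discontinuous $b^n$ nor the limit $b$ has a Bernoulli-factory dual. The paper only represents moments of such solutions as $\lim_m$ of exact duals of continuous approximants $\tilde b^m$. For merely measurable drifts, those approximants must be chosen depending on the solution itself, via Lusin/Tietze with respect to its occupation measure. Nothing in the construction makes the factory data $(\eta_k,\tilde b_k)$ depend continuously on $\tilde b$ under pointwise convergence. So the duals for different drifts admit no coupling under which they converge, and dominated convergence over the dual's randomness has nothing to act on; the paper stresses that the dual processes need not converge. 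Suppose you instead compare $u^n$ and $u$ against a common continuous dual using the error bound \eqref{eq:difference of gk in term of duals}. Then the discontinuity problem you flagged reappears as the term $|(\tilde b^n-\tilde b)(u^n_s(\cdot))|$ along the moving sequence $u^n$, which pointwise convergence of $b^n$ does not control. The missing idea is to evaluate drift mismatches only along a single fixed field. Approximate each $b^n$ by continuous $b^{n,m}$ converging $\nu$-a.e., where $\nu$ is built from the occupation measures of all the $u^n$ and of the limit. Then choose $m_n$ diagonally so that $b^{n,m_n}\to b$ $\nu$-a.e., while the exactly-dual solutions $u^{n,m_n}$ have the same limit as $u^n$.

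Second, even convergence of $\bbE[\prod_i u^n_t(x_i)]$ for each fixed $t$ would only identify the one-time marginals of a limit point $\bar u$, not its law on $C_{b,\loc}(\Rm_{>0}\times E;[0,1])$. Passing from one-time marginals to the law of the process, as in the uniqueness part of Theorem~\ref{theo:main theorem}, uses that the process already solves the martingale problem. That is what lets one-time marginals for all initial conditions determine finite-dimensional distributions, and it is exactly what is unknown for $\bar u$. So Step 2 is not optional, and the paper's proof supplies it through the proof of Lemma~\ref{lem:ptwise convergence gives solutions}:
\begin{itemize}
\item a subsequential limit $\bar u$ satisfies the mild equation with some weak limit $\hat b$ of the drift fields;
\item exact duality for the continuous-drift solutions $u^{n,m_n}\to\bar u$, together with $\tilde b^{n,m_n}(\bar u)\to\tilde b(\bar u)$ a.e.\ almost surely, shows that the defect $D^0_t$ in the It\^o expansion of $\bar u$ with drift $\tilde b$ vanishes;
\item the continuous, finite-variation, mean-zero martingale $J_t-K_t$ must then vanish, giving $\hat b=b(\bar u)$ inside the mild equation.
\end{itemize}
Only then does uniqueness in law from Theorem~\ref{theo:main theorem}, combined with your tightness step, give convergence of the whole sequence.
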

This result is difficult for the same reasons as outlined in Remark \ref{rmk:discontinuity difficulties}: the discontinuity of the drifts can interact badly with convergence of solutions. 

\subsection{Background}

The question of the well-posedness of \eqref{eq:SPDE 1} goes back to \cite{shiga:1988}.
Motivated by population genetics, he proved well-posedness for drifts of the form $b(u)=c_1(1-u)+c_2u+c_3u(1-u)$, $c_1,c_2\geq 0$, $c_3\in \Rm$, using an extremely influential duality argument. The equation \eqref{eq:SPDE 1} has since been studied widely in population genetics for a variety of smooth and irregular drifts, where it arises as the scaling limit of various models (\cite{shiga:1988,Durrett2016,Fan2021,mueller/tribe:1995}), and motivated for instance by range expansions, has been widely used to study the effect of noise on front propagation (\cite{barnes2024effect,Birzu2018,Etheridge2026,Mueller2011,Mueller:2021}).

The work of \cite{shiga:1988} was generalised by \cite{athreya/tribe:2000} to drifts given by a power series corresponding to the moment generating function of a branching process and, notably, also to more general noise coefficients. Our work complements and extends the more recent works of \cite{Mueller:2021} and \cite{barnesetal:2025}, which established the weak uniqueness of solutions of \eqref{eq:SPDE 1} for various classes of irregular drift functions $b$. More precisely, \cite{Mueller:2021} established uniqueness in law for continuous $b$ satisfying $\lvert b(u)\rvert\leq C\sqrt{u(1-u)}$ all $u\in [0,1]$, for some $C<\infty$, so necessarily $b(0)=b(1)=0$.
The work of \cite{barnesetal:2025}, building on the earlier work of the same authors \cite{Barnes2024}, established uniqueness in law for drifts $b$ having a power series expansion satisfying certain conditions, allowing for drifts which could be non-Lipschitz or even discontinuous at one of the boundary points $\{0, 1\}$, but which are necessarily analytic on $(0,1)$.
They also require structural assumptions on $b$ precluding, for instance, the Allan--Cahn drift $b(u)=u(1-u)(2u-1)$. These are complemented by the very recent work of \cite{Mytnik2026} for a similar equation with Feller-type noise, $\sqrt{u}\dot{W}$ (super-Brownian motion), with drift $b$ given by the sum of a step function at $0$ and the Laplace transform of a signed measure. This allows a discontinuity at $0$, but imposes structure and real-analyticity on $(0,1)$. In contrast, our theorem requires only that $b$ is bounded and measurable, along with the one-sided Lipschitz assumption~\eqref{eq:assumption on tilde b}. 

Our technique will be to establish a duality between solutions of \eqref{spde} and a suitable branching-coalescing particle system.
We will use that duality to determine moments $\bbE[u_t(x_1) \ldots u_t(x_N)]$ of the solution, which determine its law \citep[Lemma 1]{athreya/tribe:2000}. This strategy is well-established in SPDE models \citep{athreya/tribe:2000, barnesetal:2025, shiga:1988}.
The duality considered in the cited works can be viewed as an SPDE extension of the classical duality between branching Brownian motion and the FKPP equation \citep{Skorokhod1964,McKean1975}, and consequently they require the drift to be a moment generating function of a branching process. 

The work of \cite{etheridgeetal:2017} instigated growth in the study of moment duals for PDEs and SDEs whose drift term (corresponding to the zeroth order growth term in a PDE context) cannot be viewed as the moment--generating function of a branching process.
They considered the deterministic Allen--Cahn equation and established a moment dual in terms of a ternary branching Brownian motion with the key addition of a voter model along its branches.
This has been extended to moment duality for a general family of analytic drifts by \cite{An2023}.
However, the branching property is fundamental to the construction of this duality.
This represents a fundamental obstacle to constructing the same duality representation for Wright--Fisher-type SPDEs using branching-coalescing Brownian motions, as coalescence destroys the branching property.

A similar approach has been employed for non-spatial Wright--Fisher SDEs using Bernstein polynomials \citep{corderoetal:2022, koskelaetal:2025}, and for interacting particle systems \citep{Huang2021}.
We extend these to an SPDE setting, with our approach being an SPDE extension of \cite{koskelaetal:2025} in particular.
We use a voting scheme based on a Bernoulli factory as the branching mechanism for particles, which makes it possible to handle non-differentiable drift functions.
This has at its core the so-called Bernstein duality, which was introduced for non-spatial models by \cite{corderoetal:2022}.
A Bernoulli factory for a function $f: [0,1] \to [0,1]$ amounts to a series expansion for $f$ with probabilistically interpretable, non-negative coefficients; see \eqref{bf1} below.
Hence, they are natural objects with which to replace drift terms given by branching process moment-generating functions when the branching property is not available.
We will further extend this to discontinuous drifts by showing that the value of our duality representations converge under pointwise limits of the drifts, notwithstanding that the dual branching process process may not itself converge as there is no Bernoulli factory for discontinuous drifts (see Theorem \ref{thm:bernoulli general drift}).

Duality is fundamental to the stochastic heat equation with Wright--Fisher noise. Applications go well beyond uniqueness, for instance it is used to understand associated genealogies, establish the convergence of discrete models to it, to understanding its long-time behaviour, and even to understand coalescing Brownian motion itself \cite{Barnes2024,Blath2023,Durrett2016,Fan2023,Mueller2011}.
Our proof puts this duality on a much more general footing, which could have a variety of potential applications.

If we take a drift $b$ satisfying \eqref{eq:assumption on tilde b}, then the corresponding Wright--Fisher SDE,
\[
\rmd X_t=b(X_t)\rmd t+\sqrt{X_t(1-X_t)}\rmd W_t
\]
may not be unique in law.
For instance, this is the case for $b(u)=\mathbbm{1}(u>0)-u$, as observed by \cite{barnesetal:2025}. This can be seen by noting that Feller's criterion implies the existence of a solution to the above SDE which starts from $0$ but then immediately goes above $0$, in addition to the trivial solution starting from $0$.

In contrast, Theorem \ref{theo:main theorem} ensures that the SPDE \eqref{eq:SPDE 1} has unique-in-law weak solutions.
This is the so-called \textit{regularisation by noise} phenomenon studied by \cite{barnesetal:2025}.
Their explanation for it is that, in the SPDE setting, one can construct moment duals  based on branching processes with non-integrable reproduction law (or even branching into infinitely many particles), where the corresponding duals would be ill-defined in the SDE or PDE setting.
Such non-integrable reproduction laws permit moment duals for more irregular drifts.

At the heart of the matter is the fact that non-integrable reproduction laws allow for the number of particles to explode in finite time.
The moment duals in the PDE setting involve branching processes without any coalescence, so once the number of particles explodes we are stuck with infinitely many particles.
In the SDE setting, the particles coalesce with each other at Poisson rate, i.e.\ like Kingman's coalescent, so the coalescence will cause the number of particles to come down from infinity like $1/t$.
However, if we take for simplicity the case of each particle branching into infinitely many particles at a constant Poisson rate $k_{\infty}>0$, then \cite{kyprianouetal:2017} found a counterintuitive phase transition at $k_{\infty}=\nu/2$.
For $k_{\infty}<\nu/2$, the dual explodes but comes down from infinity, so that the number of particles is finite at almost all times.
For $k_{\infty}\geq\nu/2$, the dual is absorbed at infinity.
A similar and complementary logarithmic moment condition on family sizes of the branching process was established by \cite{lambert:2005}.
Therefore, a well-defined dual can only be obtained for a restricted family of drifts for which the branching-coalescing particle system never explodes, or at least comes down from infinity.
In contrast, \cite{barnesetal:2025,Barnes2024} established that in the SPDE setting, infinitely many particles at a single point will come down from infinity like $1/\sqrt{t}$.
Since 
\begin{equation*}
\int_0^{\epsilon}\frac{1}{\sqrt{t}}\rmd t<\infty
\end{equation*}
for all $\epsilon>0$, the system always comes down from infinity and explosion times are isolated points.

The heuristic reason for this different rate of coming down from infinity is that in the SPDE setting, particles are Brownian and coalesce according to their intersection local time.
For any pair of paricles, their intersection local time is equal to the local time process $L_t$ of a single rate $2$ Brownian motion at $0$ (the Brownian motion given by their difference).
This local time process is equal in law to the running maximum of a single Brownian motion started at $0$, hence $L_t\approx \sqrt{t}$ for small $t$.
Since this is much larger than $t$ for small $t$, particles coalesce much faster than is the case for Kingman's coalescent, which describes the dual in the SDE setting.

Despite regularisation by noise, the duality employed by \cite{barnesetal:2025} only permits one to consider $b$ which are the moment generating functions of branching processes. As we previously stated, they may be irregular (or even discontinuous) at one of the endpoints $\{0,1\}$, but must be analytic on $(0,1)$.
To see that moment duality for the regularisation by noise phenomenon applies more generally, one needs to construct a dual for a more general class of irregular $b$.
We do just that, with the duals we construct for $b$ satisfying \eqref{eq:assumption on tilde b} relying on the ``fast coming down from infinity'' established in \cite{barnesetal:2025} to ensure their well-posedness.
This puts their explanation for regularisation by noise on a much more general footing.

Finally, we note that pathwise uniqueness remains a very longstanding open problem for the stochastic heat equation with Wright-Fisher noise, even in the absence of drift.
It is unclear, at least to us, what answer one should even expect here.

\subsection*{Proof structure}
The rest of this paper is devoted to the proof of Theorem \ref{theo:main theorem}. We will impose \eqref{eq:assumption on tilde b} on the drift throughout, but with successively weaker assumptions on the regularity of $b$. 

We firstly construct a duality representation for solutions of \eqref{eq:SPDE 1} with continuous drifts in Section \ref{sec:cts}. We then extend this duality to Baire class-1 drifts, which we use to establish well-posedness in Section \ref{theo:Baire 1}. Finally, we extend our proof of duality to Borel-measurable drifts in Section \ref{sec:measurable}

\section{Duality for continuous drifts}\label{sec:cts}

Throughout this section, we will assume that $b$ is continuous and satisfies \eqref{eq:assumption on tilde b}, and that $u$ is an arbitrary weak solution of \eqref{eq:SPDE 1} for one of the chosen boundary conditions (i)-(iv), with initial condition $u_0$: a (possibly random) Borel-measurable function $E\ra [0,1]$.

The first order of business is to rewrite the form of the drift we consider.
\begin{lem}\label{lem:equivalence of different formulations of b}
    The following are equivalent:
    \begin{enumerate}
        \item $b:[0,1]\rightarrow\Rm$ is continuous and satisfies $-Cu\leq b(u)\leq C (1-u)$ for all $u\in [0,1]$, for some $C<\infty$;
        \item there exists $\mu> 0$ and $\tilde{b}:[0,1]\ra [0,1]$ such that $b(u)=\mu(\tilde{b}(u)-u)$, where $\tilde{b}$ satisfies the conditions of \cite{keane/obrien:1994} for existence of a Bernoulli factory, meaning that it is continuous, and either constant or \emph{polynomially bounded}:
\begin{equation}\label{eq:polynomial boundedness}
\min\{ \tilde{b}(u), 1 - \tilde{b}(u) \} \geq \min\{ u, 1 - u \}^n, \\
\end{equation}
for all $u \in [0, 1]$ and some $n > 0$.
    \end{enumerate}
\end{lem}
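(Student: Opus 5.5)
The plan is to prove the two implications directly. Both rest on the substitution $\tilde b(u) = u + b(u)/\mu$, equivalently $b(u) = \mu(\tilde b(u) - u)$. The only real work is choosing $\mu$ so that $\tilde b$ maps into $[0,1]$ with a linear margin at the endpoints.

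\emph{(2)$\Rightarrow$(1).} Suppose $b(u) = \mu(\tilde b(u) - u)$ with $\mu>0$ and $\tilde b:[0,1]\to[0,1]$ continuous. Then $b$ is continuous. Since $\tilde b \geq 0$, we get $b(u) \geq -\mu u$, and since $\tilde b\leq 1$, we get $b(u)\leq \mu(1-u)$. So \eqref{eq:assumption on tilde b} holds with $C=\mu$. Polynomial boundedness is not needed in this direction.

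\emph{(1)$\Rightarrow$(2).} Given $b$ continuous with $-Cu\le b(u)\le C(1-u)$, set $\mu := 2\max\{C,1\}>0$ and $\tilde b(u) := u + b(u)/\mu$. Then $\tilde b$ is continuous and $b(u)=\mu(\tilde b(u)-u)$. From the lower bound on $b$,
\[
\tilde b(u) \geq u\Bigl(1-\frac{C}{\mu}\Bigr) \geq \frac{u}{2}.
\]
From the upper bound on $b$,
\[
1-\tilde b(u) \geq (1-u)\Bigl(1-\frac{C}{\mu}\Bigr) \geq \frac{1-u}{2}.
\]
In particular $\tilde b$ takes values in $[0,1]$.

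Now write $m(u)=\min\{u,1-u\}$, so $m(u)\in[0,\tfrac12]$. The two displays give $\min\{\tilde b(u),1-\tilde b(u)\}\geq m(u)/2$. Since $m(u)\le \tfrac12$, we have $m(u)/2\geq m(u)^2$. Hence \eqref{eq:polynomial boundedness} holds with $n=2$, and $\tilde b$ satisfies the Keane--O'Brien conditions. (The constant alternative in those conditions is not needed here.)

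The only point requiring care is the choice of $\mu$. We need $\mu>0$ even when $C=0$, which is why we take the maximum with $1$. We also need $\mu$ strictly larger than $C$, so that the factor $1-C/\mu$ stays bounded away from zero. Without that margin $\tilde b$ could touch $0$ or $1$ in the interior of $[0,1]$, or vanish faster than polynomially at the endpoints, and polynomial boundedness would fail. Taking $\mu=2\max\{C,1\}$ handles both issues at once.
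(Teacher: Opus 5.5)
Your proposal is correct and follows essentially the same route as the paper: the substitution $\tilde b(u)=u+b(u)/\mu$ with $\mu\geq 2C$ yields $\min\{\tilde b(u),1-\tilde b(u)\}\geq \min\{u,1-u\}/2\geq \min\{u,1-u\}^2$, so $n=2$, and the converse follows from the bounds $-\mu u\leq b(u)\leq \mu(1-u)$. Your choice $\mu=2\max\{C,1\}$ also makes $\mu>0$ explicit when $C=0$, which the paper's condition ``$\mu\geq 2C$'' leaves implicit.
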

\begin{proof}[Proof of Lemma \ref{lem:equivalence of different formulations of b}]
    We firstly prove that $1$ implies $2$. We define $\tilde{b}(u)=b(u)/\mu+u$ (so that $b(u)=\mu(\tilde{b}(u)-u)$) for some $\mu>0$ to be determined, so that $\tilde{b}$ is necessarily continuous for any $\mu>0$. Then we observe that
    \[
    \tilde{b}(u)\geq -\frac{C}{\mu}u+u\geq \frac{u}{2}\geq \min\{u,1-u\}^2,
    \]
    for all $\mu\geq 2C$. 
    On the other hand,
    \[
    1-\tilde{b}(u)\geq 1-\frac{C}{\mu}(1-u)-u\geq \frac{1}{2}(1-u)\geq \min\{u,1-u\}^2,
    \]
    for all $\mu\geq 2C$. Therefore $1$ implies $2$.
    
    We now prove that $2$ implies $1$. 
    Since $\tilde{b}(u) \in [0, 1]$, we have that $b(u) = \mu (\tilde{b}(u) - u)$ satisfies
    \[
    -\mu u=\mu(0-u)\leq b(u)\leq \mu (1-u).
    \]
    Therefore $1$ is satisfied with $C=\mu$.
\end{proof}

In light of Lemma \ref{lem:equivalence of different formulations of b}, we henceforth consider the one-dimensional SPDE
\begin{equation}
\partial_t u = \frac{1}{2} \Delta u + \mu (\tilde{b}(u) - u), + \sqrt{\nu u (1 - u)} \dot{W}, \label{spde}
\end{equation}
where $\tilde{b} : [0, 1] \to [0, 1]$ is assumed to satisfy \eqref{eq:polynomial boundedness}, $\dot{W}$ is space-time white noise, $\mu, \nu > 0$ are constants, and $u$ satisfies one of the boundary conditions (i)-(iv) described in the introduction. 

Since $\tilde{b}$ satisfies the conditions of \cite{keane/obrien:1994} for the existence of a Bernoulli factory, there exists a sequence of continuous and polynomially bounded functions $\{ \tilde{b}_k \}_{k \geq 1}$, as well as positive integers $\{ \eta_k \}_{k \geq 1}$ such that
\begin{equation}
\tilde{b}(u) = \sum_{k = 1}^{\infty} \Big( \frac{3}{4} \Big)^{k - 1} \frac{1}{4} \bbP( \tilde{b}_k( Y( \eta_k, u) / \eta_k ) \geq 1/2 ), \label{bf1} \\
\end{equation}
where $Y(n, u) \sim \text{Bin}(n, u)$.

For $N \in \bbN$ let $[N] := \{1, \ldots, N\}$.
We will construct a dual interacting particle system as follows.
Let $I_t$ be the number of particles alive at time $t \geq 0$.
The position of the particle with label $i \in [I_t]$ is $X_t^i$.
At time $t = 0$ there are $N$ particles at positions $x^{1:N} := (x^1, \ldots, x^N)$.
Each particle undergoes an independent Brownian motion on $[0, 1]$, which is either periodic, killed, or reflecting on the boundary, matching the boundary condition of \eqref{spde}.
During its lifetime, each particle undergoes two types of events:
\begin{enumerate}
\item Branching: each particle dies at rate $\mu \rmd t$ and leaves $Z_1$ offspring at its location, with $\bbP( Z_1 = \eta_k ) = ( 3 / 4 )^{ k - 1 } (1 / 4)$.
\item Coalescence: particles $X_t^i$ and $X_t^j$ merge into one particle at rate $(\nu / 2) \rmd L_t^{i, j}$, where $L_t^{i, j}$ is the local time of $X_t^i - X_t^j$ at zero.
\end{enumerate}
The particle system is initialised at locations $x^{1:N}$ and evolves for $t$ units of time.
At that point the remaining particles $\{ X_t^i : i \in [I_t] \}$ are assigned binary types, 0 or 1, independently and with respective probabilities $1 - u_0( X_t^i )$ and $ u_0(X_t^i)$.
Types are propagated along the edges of the random network traced by the particle system.
At a coalescence event, the two coalescing particles inherit the type of the particle to which they merge.
At a branching event with $\eta_k$ children, the parent particle is assigned type 1 if $\tilde{b}_k(\#\{\text{type } 1 \text{ children}\} / \eta_k) \geq 1/2$.
With probability 1, this particle system is contains only finitely many branching events and particles in any finite time-window whenever $N < \infty$ \cite[Theorem 1.4]{barnesetal:2025}.
With probability 1, this particle system contains only finitely many branching events and particles in any finite time-window whenever $N < \infty$, since $\eta_k < \infty$ for each $k \in \bbN$ \cite[Theorem 1.4]{barnesetal:2025}.

To adapt the Bernstein duality of \cite{corderoetal:2022} to our spatial model, let $P_t^{u_0}(x^{1:N} | H)$ be the conditional probability that all $N$ leaves (time 0 particles) of the system are of type 1, given the realisation of the branching-coalescing particle system $H = ( H_s )_{s \in [0, t]}$.
For concreteness we order the particles by time of birth, with the ordering of siblings chosen arbitrarily.
For a set $A$, let $\Pi_j(A)$ be the set of subsets of size $j$.
Let $\tau^N$ denote the set of binary strings of length $N$, and let $\tau_{\ell}^N$ be the subset of strings which sum to $\ell$.
In the absence of branching events $P_t^{u_0}(x^{1:N} | H) = \prod_{i \in I_t} u_0(X_t^i)$, while when $N = 1$, a single branching event into $\eta$ ancestors with no further events yields
\begin{equation*}
P_t^{u_0}(x | H) = \sum_{j = 0}^{\eta} \mathds{1}\{\tilde{b}_k(j/\eta) \geq 1/2 \} \sum_{(i_1, \ldots, i_j) \in \Pi_j([\eta])} \Bigg( \prod_{k = 1}^j u_0(X_t^{i_k}) \Bigg) \prod_{i \neq i_1, \ldots, i_j} ( 1 - u_0(X_t^i)).
\end{equation*}
Other patterns of branching and coalescence events result in a more complicated expression of the form
\begin{equation*}
P_t^{u_0}(x^{1:N} | H) = \sum_{q \in \tau^{I_t}} V_t^{I_t}(q) \Bigg( \prod_{k : q_k = 1} u_0(X_t^k) \Bigg) \prod_{k : q_k = 0} (1 - u_0(X_t^k)),
\end{equation*}
where $V_t^{I_t}(q)$ is the binary indicator that all $N$ leaves are of type 1 given that the roots with indices with $q_k = 1$ are type 1, while all others are of type 0.
Implicitly it depends on the whole history of the particle system, but that dependence has been suppressed for readability.

Let $x^{1:N \setminus i} = (x^1, \ldots, x^{i - 1}, x^{i + 1}, \ldots, x^N)$ and $x^{1:N} \oplus_j x = (x^1, \ldots, x^N, x, \ldots, x)$, where $j$ is the number of times the scalar $x$ is concatenated onto $x^{1:N}$.
Similarly, for two vectors $x^{1:N}, y^{1:M}$ let $x^{1:N} \oplus y^{1:M} = (x^1, \ldots, x^N, y^1, \ldots, y^M)$.
For $q \in \tau^N$ and $i \neq j \in [N]$ we define the $(i, j)$-split operator $S_{i, j}$ as
\begin{equation}\label{split_operator}
    S_{i, j} q := q^{1:((i \wedge j) - 1)} \oplus_1 q^N \oplus q^{(i \wedge j):((i \vee j) - 1)} \oplus_1 q^N \oplus q^{(i \vee j) : (N - 1)},
\end{equation}
while for $i \in [N]$ and $k \geq 1$ such that $i + \eta_k - 1 \leq N$, we define the $(i, k)$-merger operator $M_{i, k}$ as
\begin{equation}\label{merger_operator}
    M_{i, k} q = q^{1:(i - 1)} \oplus_1 \mathds{1}\{\tilde{b}_k(\|q^{(N - \eta_k + 1):N}\|_1 / \eta_k) \geq 1/2\} \oplus q^{(i + 1):(N - \eta_k)}.
\end{equation}
The collection $(X_t^{1:I_t}, V_t^{I_t}) \equiv (X_t^{1:I_t}, (V_t^{I_t}(q))_{q \in \{0, 1\}^{|I_t|}})$ is a Markov process in which each $X_t^i$ undergoes an independent Brownian motion, and the process jumps from state $(x^{1:N}, v^N)$ to
\begin{enumerate}
    \item $(x^{1:N\setminus i} \otimes_{\eta_k} x^i, (v^N(M_{i, k} q))_{q \in \tau^{N + \eta_k - 1}})$ with rate $\mu (3 / 4)^{k - 1} (1 / 4) \rmd t$,
    \item $(x^{1:N\setminus \{i, j\}} \oplus_1 x^i, (v^N(S_{i, j} q))_{q \in \tau^{N - 1}})$ with rate $(\nu / 2) \rmd L_t^{i, j}$.
\end{enumerate}

For a function $u : [0, 1] \to [0, 1]$ let
\begin{equation*}
H(u; x^{1:N}) := \Bigg( \Big(\prod_{k : q_k = 1} u(x^k) \Big) \prod_{k : q_k = 0} (1 - u(x^k)) \Bigg)_{q \in \tau^N},
\end{equation*}
and let
\begin{equation*}
\langle u, v \rangle := \sum_{i = 1}^N u_i v_i
\end{equation*}
be the usual inner product for two vectors of equal length $N$.

\begin{theo}\label{thm:bernoulli}
Let $\tilde{b} : [0, 1] \to [0, 1]$ in \eqref{spde} be continuous and polynomially bounded.
For $N \in \mathbb{N}$, $x^{1:N} \in [0, 1]^N, v^N \in \{0, 1\}^{2^N}$, and any measurable $u_0 : [0, 1] \to [0, 1]$, 
\begin{equation}\label{main_eq}
\bbE[ \langle H(u_t; x^{1:N}), v^N \rangle ] = \bbE[\langle H(u_0; X_t^{1:I_t}), V_t^{I_t} \rangle | (X_0^{1:I_0}, V_0^{I_0}) = (x^{1:N}, v^N)].
\end{equation}
\end{theo}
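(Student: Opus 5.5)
The proof will follow the standard duality scheme of \cite{shiga:1988, athreya/tribe:2000}. We build the function
\[
F(w; x^{1:N}, v^N) := \langle H(w; x^{1:N}), v^N \rangle ,
\]
which is defined for profiles $w$ and dual states $(x^{1:N},v^N)$. We then show that the generator of $u$ acting on $F$ in its first argument coincides with the generator of the dual $(X,V)$ acting on $F$ in its second pair of arguments. This gives
\[
\frac{\rmd}{\rmd s}\bbE[F(u_{t-s}; X_s, V_s)] = 0
\]
for $s\in[0,t]$, with $u$ and $(X,V)$ independent, and evaluating at $s=0$ and $s=t$ yields \eqref{main_eq}.

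\textbf{Smoothing.} Point evaluations $u_t(x)$ are not semimartingales in $t$, so we first replace them by mollified versions. Set $u^\epsilon_t(x) = \int p(\epsilon,x,y)u_t(y)\rmd y$, where $p$ is the heat kernel with the relevant boundary condition. Write $F(w;x^{1:N},v^N)=\sum_q v^N(q)\prod_{k:q_k=1}w(x^k)\prod_{k:q_k=0}(1-w(x^k))$. Using \eqref{E:MildSol_FKPP}, or equivalently the weak form against smooth test functions, we apply It\^o's formula to $\prod_k \phi_k(u^\epsilon_s(x^k))$ with $\phi_k\in\{w,1-w\}$ and collect three contributions.
\begin{enumerate}
\item The Laplacian terms $\tfrac12\Delta_{x^k}$ act on each coordinate. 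These match the Brownian generator of the dual particles.
\item The drift terms $\mu(\tilde b(w(x^k)) - w(x^k))$. We expand $\tilde b(w)$ using \eqref{bf1} with $\bbP(\tilde b_k(Y(\eta_k,w)/\eta_k)\ge 1/2)=\sum_j \mathds 1\{\tilde b_k(j/\eta_k)\ge1/2\}\binom{\eta_k}{j}w^j(1-w)^{\eta_k-j}$. Replacing the factor $w(x^k)$ by this Bernstein polynomial, and symmetrically replacing $1-w(x^k)$ by $1-\tilde b(w(x^k))$, is exactly $F$ evaluated after the merger operator $M_{i,k}$ with $\eta_k$ copies of $x^i$. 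The $-\mu w$ term is the killing of the parent. Together they match the branching jump at rate $\mu(3/4)^{k-1}/4$.
\item The cross-variation terms $\nu\, w(x^i)(1-w(x^i))$ appear as $\epsilon\to0$, localised on $\{x^i=x^j\}$. Here $\phi_i\phi_j$ with both factors equal to $w$ gives $w(x)^2$ corrected by $w(1-w)$, which yields $w$, and mixed pairs give $0$. This is precisely $F$ at the split configuration $S_{i,j}$ minus $F$, paid at rate $(\nu/2)\rmd L^{i,j}$ once the dual's local-time coalescence is made precise.
\end{enumerate}

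\textbf{Rigorous route.} The cleanest rigorous argument is the Ethier--Kurtz approach \citep[Thm.~4.4.11]{} in its Walsh-SPDE form, following Athreya--Tribe (their Lemma on approximate duality). Let $X^\epsilon$ be the dual in which coalescence occurs at rate $\frac\nu2\int p(\epsilon,\cdot)$-smoothed collision kernel $\rmd s$ instead of local time. We then show, for fixed $\epsilon$,
\[
\frac{\rmd}{\rmd s}\bbE[F(u^\epsilon_{t-s};X^\epsilon_s,V^\epsilon_s)] = \text{error}\to 0 .
\]
Finally we let $\epsilon\to0$, using convergence of the smoothed collision rates to local time, which follows from the construction in \cite{barnesetal:2025}.

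\textbf{Main obstacle.} The hardest step is integrability and the passage to the limit. Since $\bbE[\eta_{Z}]$ may be infinite, the number of dual particles need not have finite mean, so the exchange of expectation and derivative, and the martingale property of $F(u;X_s,V_s)$, is not automatic. The saving feature is that $|F|\le 1$ always: $H$ has entries forming a probability vector and $v\in\{0,1\}$. Hence all jumps are bounded. I would therefore first truncate the branching law to $k\le K$, which gives finitely many particles in expectation and lets the Ethier--Kurtz argument apply directly with bounded test functions. I would then let $K\to\infty$. On the left, $u^{(K)}$, the solution with drift built from the truncated series, converges to $u$ in law, and I would use tightness together with the fact that the truncated drifts converge uniformly to $\tilde b$. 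On the right, the truncated dual agrees with the full dual on the event that no branching with $k>K$ occurs before $t$. Finiteness of branching events \cite[Theorem 1.4]{barnesetal:2025} makes the probability of this event tend to one, so bounded convergence concludes.
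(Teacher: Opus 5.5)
Your matching of the three generator contributions is the paper's scheme: the Laplacian against the Brownian motions, the Bernstein expansion \eqref{bf1} of $\tilde b$ against the merger operator, and $u(1-u)=u-u^2=(1-u)-(1-u)^2$ against the split operator at rate $(\nu/2)\rmd L^{i,j}$. The Ethier--Kurtz interpolation over independent copies is also the paper's. Your $K\to\infty$ limit on the dual side is fine (couple up to the first branching with $k>K$, use finiteness of branching events and $|F|\le 1$).

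The gap is on the SPDE side. Tightness plus uniform convergence of the truncated drifts only shows that subsequential limits of $u^{(K)}$ are \emph{some} weak solutions of \eqref{spde}. Identifying such a limit in law with the given, arbitrary solution $u$ requires uniqueness in law for \eqref{spde}, which is exactly what \eqref{main_eq} is used to prove. So your argument gives the duality only for solutions arising as such limits, and the uniqueness conclusion drawn from it would be circular.

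To keep $u$ arbitrary, pair the fixed $u$ with the truncated dual and carry the drift mismatch as an error term, as in the proof of Theorem~\ref{thm:bernoulli general drift}. That error is of order $(3/4)^K\,\bbE\int_0^T I^{(K)}_r\,\rmd r$. It vanishes only if $\bbE\int_0^T I^{(K)}_r\,\rmd r$ is bounded uniformly in $K$. A Gronwall bound only gives growth like $\exp(\mu T\sum_{k\le K}(3/4)^{k-1}\eta_k/4)$, and $(3/4)^K$ need not beat this since the $\eta_k$ can grow arbitrarily fast.

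The uniform bound is the missing idea. In the SPDE setting, local-time coalescence makes the system come down from infinity like $t^{-1/2}$. Every truncated dual is therefore dominated by coalescing Brownian motions with infinite branching at rate $\mu$, whose particle count is integrable \citep[Theorem 1.4]{barnesetal:2025}. In fact $I_t$ has exponential moments even when the mean offspring number $\sum_k (3/4)^{k-1}\eta_k/4$ is infinite \citep[Proposition 2.1]{barnesetal:2025}, contrary to your premise.

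With this input truncation is unnecessary: the paper runs the interpolation directly on the full dual and uses these moments to justify Fubini. The bound $|F|\le 1$ alone would not suffice there, because the dual generator applied to $F$ sums over $I_s$ branching sites and $I_s(I_s-1)$ pairs. Similarly, the paper avoids your smoothed-collision dual. It mollifies only $u$ and passes to $\varepsilon\to 0$ in the cross-variation term via \citet[Lemma 2]{athreya/tribe:2000}, so it never needs approximate duals to converge to the local-time dual.
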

\begin{proof}[Proof of Theorem \ref{thm:bernoulli}]
For $\varepsilon > 0$, let $p_{\varepsilon}$ be the Gaussian density with mean zero and variance $\varepsilon$, and for a function $f : [0, 1] \to \bbR$ let $f^{\varepsilon}$ be the convolution of $f$ with the Gaussian density with mean 0 and variance $\varepsilon$:
\begin{equation*}
    f^{\varepsilon}(x) := (f * p_{\varepsilon})(x) = \frac{1}{\sqrt{2 \pi \varepsilon}} \int_0^1 f(y) e^{-(y - x)^2 / (2 \varepsilon)} \rmd y.
\end{equation*}
By It\=o's formula,
\begin{align}
&\bbE[\langle H(u_t^{\varepsilon}; x^{1:N}), v^N \rangle] - \langle H(u_0^{\varepsilon}; x^{1:N}), v^N \rangle \notag \\
&= \int_0^t \Bigg( \bbE\Bigg[\sum_{q \in \tau^N} v^N(q) \sum_{i = 1}^N (-1)^{\mathds{1}\{q_i = 0\}} \Big( \prod_{j \neq i : q_j = 1} u_s^{\varepsilon}(x^j) \Big) \Big( \prod_{j \neq i : q_j = 0} (1 - u_s^{\varepsilon}(x^j)) \Big) \frac{1}{2} \Delta u_s^{\varepsilon}(x^i) \Bigg] \notag \\
&\phantom{=} + \mu \bbE\Bigg[\sum_{q \in \tau^N} v^N(q) \sum_{i = 1}^N (-1)^{\mathds{1}\{q_i = 0\}} \Big( \prod_{j \neq i : q_j = 1} u_s^{\varepsilon}(x^j) \Big) \Big( \prod_{j \neq i : q_j = 0} (1 - u_s^{\varepsilon}(x^j)) \Big) \notag \\
&\phantom{= + \mu \bbE\Bigg[\sum_{q \in \tau^N} v^N(q) \sum_{i = 1}^N (-1)^{\mathds{1}\{q_i = 0\}} \Big( \prod_{j \neq i : q_j = 1} u_s^{\varepsilon}(x^j) \Big)} \times ( (\tilde{b} \circ u_s)^{\varepsilon}(x^i) - u_s^{\varepsilon}(x^i) ) \Bigg] \notag \\
&\phantom{=} + \frac{\nu}{2} \bbE\Bigg[\sum_{q \in \tau^N} v^N(q) \sum_{i \neq j}  (-1)^{\mathds{1}\{q_i = 0\}} (-1)^{\mathds{1}\{q_j = 0\}} \Big( \prod_{k \neq i, j : q_k = 1} u_s(x^k) \Big) \Big( \prod_{k \neq i, j : q_k = 0} (1 - u_s(x^k) ) \Big) \notag \\
&\phantom{= + \frac{\nu}{2} \bbE\Bigg[\sum_{q \in \tau^N} v^N(q) \sum_{i \neq j}  (-1)^{\mathds{1}\{q_i = 0\}} (-1)} \times \int_0^1 p_{\varepsilon}(y - x^i) p_{\varepsilon}(y - x^j) u_s(y) (1 - u_s(y)) \rmd y \Bigg] \Bigg) \rmd s \notag \\
&=: \int_0^t \Big( A_s^{\varepsilon}(x^{1:N}, v^N) + \mu B_s^{\varepsilon}(x^{1:N}, v^N) + \frac{\nu}{2}C_s^{\varepsilon}(x^{1:N}, v^N) \Big) \rmd s. \label{main_lhs}
\end{align}

\begin{rmk}\label{rmk:where b enters proof from SPDE}
    For future convenience in the proof of Theorem \ref{thm:bernoulli general drift}, we highlight that \eqref{main_lhs} is the only place where $\tilde b$ enters our proof as the drift term in \eqref{spde}.
\end{rmk}

For an arbitrary twice-differentiable function $h : [0,1] \to [0,1]$, the right-hand side of \eqref{main_eq} can be written as
\begin{align}
&\bbE[\langle H(h; X_t^{1:I_t}), V_t^{I_t} \rangle | (X_0^{1:N}, V_0^N) = (x^{1:N}, v^N)] - \langle H(h; x^{1:N}), v^N \rangle \notag \\
&= \int_0^t \bbE\Bigg[ \sum_{i = 1}^{I_s} \sum_{q \in \tau^{I_s}} V_s^{I_s}(q) (-1)^{\mathds{1}\{q_i = 0\}} \Big( \prod_{j \neq i : q_j = 1} h(X_s^j) \Big) \Big( \prod_{j \neq i : q_j = 0} (1 - h(X_s^j)) \Big) \frac{1}{2} \Delta h(X_s^i) \Bigg] \rmd s \notag \\
&\phantom{=} + \mu \int_0^t \Bigg( \sum_{k = 1}^{ \infty} \Big( \frac{3}{4} \Big)^{k - 1} \frac{1}{4} \bbE\Bigg[\sum_{i = 1}^{I_{s-}} \sum_{q \in \tau^{I_{s-} - 1}} \sum_{\ell = 0}^{\eta_k} \sum_{\tilde{q} \in \tau_{\ell}^{\eta_k}} V_{s-}^{I_{s-}}(M_{i, k} (q \oplus \tilde{q})) \notag \\
&\phantom{= + \mu \int_0^t \Bigg( \sum_{k = 1}^{ \infty} \frac{1}{4} \Big( \frac{3}{4} \Big)^{k - 1} \bbE\Bigg[} \times \Big( \prod_{j \neq I_{s-} : q_j = 1} h(X_{s-}^{j + \mathds{1}\{j \geq i\}}) \Big) \Big( \prod_{j \neq I_{s-} : q_j = 0} (1 - h(X_{s-}^{j + \mathds{1}\{j \geq i\}})) \Big) \notag \\
&\phantom{= + \mu \int_0^t \Bigg( \sum_{k = 1}^{ \infty} \frac{1}{4} \Big( \frac{3}{4} \Big)^{k - 1} \bbE\Bigg[} \times h(X_{s-}^i)^{\ell} (1 - h(X_{s-}^i))^{\eta_k - \ell}\Bigg] \notag \\
&\phantom{= + \mu \int_0^t \Bigg(} - \bbE\Bigg[ I_{s-} \sum_{q \in \tau^{I_{s-}}} V_{s-}^{I_{s-}}(q) \Big( \prod_{j : q_j = 1} h(X_{s-}^j) \Big) \Big( \prod_{j : q_j = 0} (1 - h(X_{s-}^j)) \Big) \Bigg] \Bigg) \rmd s \notag \\
&\phantom{=} + \frac{\nu}{2} \bbE\Bigg[\int_0^t \Bigg( \sum_{i = 1}^{I_{s-}} \sum_{j \neq i} \sum_{q \in \tau^{I_{s-} - 1}} V_{s-}^{I_{s-}}(S_{i,j} q) \Big( \prod_{k < I_{s-} - 1 : q_k = 1} h(X_{s-}^{k + \mathds{1}\{k \geq i \wedge j\} + \mathds{1}\{k \geq i \vee j\}}) \Big) \notag \\
&\phantom{= + \frac{\nu}{2} \bbE\Bigg[ \int_0^t \Bigg( \sum_{i = 1}^{I_{s-}} \sum_{j \neq i} \sum_{q \in \tau^{I_{s-} - 1}} V_{s-}^{I_{s-}}(S_{i,j} q)} \times \Big( \prod_{k < I_{s-} - 1 : q_k = 0} (1 - h(X_{s-}^{k + \mathds{1}\{k \geq i \wedge j\} + \mathds{1}\{k \geq i \vee j\}})) \Big) \notag \\
&\phantom{= + \frac{\nu}{2} \bbE\Bigg[ \int_0^t \Bigg( \sum_{i = 1}^{I_{s-}} \sum_{j \neq i} \sum_{q \in \tau^{I_{s-} - 1}} V_{s-}^{I_{s-}}(S_{i,j} q)} \times h(X_{s-}^i)^{\mathds{1}\{q_{I_{s-} - 1} = 1\}} (1 - h(X_{s-}^i))^{\mathds{1}\{q_{I_{s-} - 1} = 0\}} \notag \\
&\phantom{= + \frac{\nu}{2} \bbE\Bigg[ \int_0^t \Bigg(} - I_{s-} (I_{s-} - 1) \sum_{q \in \tau^{I_{s-}}} V_{s-}^{I_{s-}}(q) \Big( \prod_{j : q_j = 1} h(X_{s-}^j) \Big) \Big( \prod_{j : q_j = 0} (1 - h(X_{s-}^j)) \Big) \Bigg) \rmd L_s^{i, j} \Bigg] \notag \\
&=: \int_0^t \Big( \bar{A}_s(h) + \mu \bar{B}_s(h) \Big) \rmd s + \frac{\nu}{2} \int_0^t \rmd \bar{C}_s(h). \label{main_rhs}
\end{align}

Let $u_t^{\varepsilon}$ and $(X_t^{1:I_t}, V_t^{I_t})$ be independent copies of a solution of \eqref{spde} and the labelled branching-coalescing particle system, respectively, and let
\begin{equation*}
    g(t, s, \varepsilon) := \bbE\Bigg[\sum_{q \in \tau^{I_s}} V_s^{I_s}(q) \Big( \prod_{i : q_i = 1} u_t^{\varepsilon}(X_s^i) \Big) \Big( \prod_{i : q_i = 0} (1 - u_t^{\varepsilon}(X_s^i)) \Big) \Big| (X_0^{1:I_0}, V_0^{I_0}) = (x^{1:N}, v^N) \Bigg],
\end{equation*}
where the expectation is with respect to the law of both processes.
Then for $T<\infty$,
\begin{align*}
    \int_0^T [g(r, 0, \varepsilon) - g(0, r, \varepsilon)] \rmd r &= \int_0^T [g(T - r, r, \varepsilon) - g(0, r, \varepsilon)] \rmd r - \int_0^T [g(r, T - r, \varepsilon) - g(r, 0, \varepsilon)] \rmd r.
\end{align*}
Substituting in \eqref{main_lhs} and \eqref{main_rhs} yields
\begin{align}
    &\int_0^T [g(r, 0, \varepsilon) - g(0, r, \varepsilon)] \rmd r \notag \\
    &= \int_0^T \int_0^{T - r} [ A_s^{\varepsilon}(X_r^{1:I_r}, V_r^{I_r}) - \bar{A}_s(u_r^{\varepsilon})] \rmd s \rmd r + \mu \int_0^T \int_0^{T - r} [ B_s^{\varepsilon}(X_r^{1:I_r}, V_r^{I_r}) - \bar{B}_s(u_r^{\varepsilon})] \rmd s \rmd r \notag \\
    &\phantom{=} + \frac{\nu}{2} \int_0^T \int_0^{T - r} [ C_s^{\varepsilon}(X_r^{1:I_r}, V_r^{I_r}) \rmd s - \rmd \bar{C}_s(u_r^{\varepsilon})] \rmd r. \label{process_difference}
\end{align}
We will consider each of the three summands on the right-hand side in turn.
Firstly,
\begin{align*}
     &A_s^{\varepsilon}(X_r^{1:I_r}, V_r^{I_r}) - \bar{A}_s(u_r^{\varepsilon}) \\
    &= \bbE\Bigg[\sum_{q \in \tau^{I_r}} \sum_{i = 1}^{I_r} V_r^{I_r}(q) (-1)^{\mathds{1}\{q_i = 0\}} \Big( \prod_{j \neq i : q_j = 1} u_s^{\varepsilon}(X_r^j) \Big) \Big( \prod_{j \neq i : q_j = 0} (1 - u_s^{\varepsilon}(X_r^j)) \Big) \frac{1}{2} \Delta u_s^{\varepsilon}(X_r^i) \Bigg] \\
    &\phantom{=} - \mathbb{E}\Bigg[ \sum_{q \in \tau^{I_s}} \sum_{i = 1}^{I_s} V_s^{I_s}(q) (-1)^{\mathds{1}\{q_i = 0\}} \Big( \prod_{j \neq i : q_j = 1} u_r^{\varepsilon}(X_s^j) \Big) \Big( \prod_{j \neq i : q_j = 0} (1 - u_r^{\varepsilon}(X_s^j)) \Big) \frac{1}{2} \Delta u_r^{\varepsilon}(X_s^i) \Bigg].
\end{align*}
The population size $I_t$ has an exponential moment by \citet[Proposition 2.1]{barnesetal:2025}, and hence the random numbers of terms in the summations over $q \in \tau^{I_t}$ and $i \in [I_t]$, for $t \in \{r, s\}$ have finite mean.
All other terms in the integrands are bounded, so Fubini's theorem yields that
\begin{equation*}
    \int_0^T \int_0^{T - r} [ A_s^{\varepsilon}(X_r^{1:I_r}, V_r^{I_r}) - \bar{A}_s(u_r^{\varepsilon})] \rmd s \rmd r = 0
\end{equation*}
for any $\varepsilon > 0$, whereupon we can take $\varepsilon \to 0$ in \eqref{process_difference} leaving
\begin{align}
    \int_0^T [g(r, 0, 0) - g(0, r, 0)] \rmd r = {}& \mu \int_0^T \int_0^{T - r} [ B_s^0(X_r^{1:I_r}, V_r^{I_r}) - \bar{B}_s(u_r)] \rmd s \rmd r \notag \\
    &+ \frac{\nu}{2} \int_0^T \int_0^{T - r} [ C_s^0(X_r^{1:I_r}, V_r^{I_r}) \rmd s - \rmd \bar{C}_s(u_r)]  \rmd r. \label{eps_to_zero}
\end{align}
The integrand $B_s^{0}(X_r^{1:I_r}, V_r^{I_r})$ on the right-hand side expands into
\begin{align*}
&B_s^{0}(X_r^{1:I_r}, V_r^{I_r}) \\
&= \bbE\Bigg[ \sum_{q \in \tau^{I_r}} V_r^{I_r}(q) \sum_{i : q_i = 1 } \Big( \prod_{j \neq i : q_j = 1} u_s(X_r^j) \Big) \Big( \prod_{j : q_j = 0} (1 - u_s(X_r^j)) \Big) [ \tilde{b}(u_s(X_r^i)) - u_s(X_r^i) ] \Bigg] \\
&\phantom{=} - \bbE\Bigg[\sum_{q \in \tau^{I_r}} V_r^{I_r}(q) \sum_{i : q_i = 0} \Big( \prod_{j : q_j = 1} u_s(X_r^j) \Big) \Big( \prod_{j \neq i : q_j = 0} (1 - u_s(X_r^j)) \Big) [ \tilde{b}(u_s(X_r^i)) - 1 + 1 - u_s(X_r^i) ] \Bigg] \\
&= \bbE\Bigg[\sum_{q \in \tau^{I_r}} V_r^{I_r}(q) \sum_{i : q_i = 1 } \Big( \prod_{j \neq i : q_j = 1} u_s(X_r^j) \Big) \Big( \prod_{j : q_j = 0} (1 - u_s(X_r^j)) \Big) \tilde{b}(u_s(X_r^i)) \Bigg] \\
&\phantom{=} + \bbE\Bigg[\sum_{q \in \tau^{I_r}} V_r^{I_r}(q) \sum_{i : q_i = 0} \Big( \prod_{j : q_j = 1} u_s(X_r^j) \Big) \Big( \prod_{j \neq i : q_j = 0} (1 - u_s(X_r^j)) \Big) [ 1 - \tilde{b}(u_s(X_r^i)) ] \Bigg] \rmd s \\
&\phantom{=} - \bbE\Bigg[ I_r \sum_{q \in \tau^{I_r}} V_r^{I_r}(q) \Big( \prod_{j : q_j = 1} u_s(X_r^j) \Big) \Big( \prod_{j : q_j = 0} (1 - u_s(X_r^j)) \Big) \Bigg],
\end{align*}
and using \eqref{bf1},
\begin{align}
&B_s^{0}(X_r^{1:I_r}, V_r^{I_r}) \notag \\
&= \sum_{k = 1}^{\infty} \Big(\frac{3}{4}\Big)^{k - 1} \frac{1}{4} \sum_{\ell = 0}^{\eta_k} \mathds{1}\{ \tilde{b}_k( \ell/\eta_k ) \geq 1/2 \} \binom{\eta_k}{\ell} \notag \\
&\phantom{= \sum_{k = 1}^{\infty}} \times \bbE\Bigg[\sum_{q \in \tau^{I_r}} V_r^{I_r}(q) \sum_{i : q_i = 1} \Big( \prod_{j \neq i : q_j = 1} u_s(X_r^j) \Big) \Big( \prod_{j : q_j = 0} (1 - u_s(X_r^j)) \Big) u_s(X_r^i)^{\ell} (1 - u_s(X_r^i))^{\eta_k - \ell} \Bigg] \notag \\
&\phantom{=} + \sum_{k = 1}^{\infty} \Big(\frac{3}{4}\Big)^{k - 1} \frac{1}{4} \sum_{\ell = 0}^{\eta_k} \mathds{1}\{ \tilde{b}_k( \ell/\eta_k ) < 1/2 \} \binom{\eta_k}{\ell} \notag \\
&\phantom{= \sum_{k = 1}^{\infty}} \times \bbE\Bigg[\sum_{q \in \tau^{I_r}} V_r^{I_r}(q) \sum_{i : q_i = 0}  \Big( \prod_{j : q_j = 1} u_s(X_r^j) \Big) \Big( \prod_{j \neq i : q_j = 0} (1 - u_s(X_r^j)) \Big) u_s(X_r^i)^{\ell} (1 - u_s(X_r^i))^{\eta_k - \ell} \Bigg] \notag \\
&\phantom{=} - \bbE\Bigg[I_r \sum_{q \in \tau^{I_r}} V_r^{I_r}(q) \Big( \prod_{j : q_j = 1} u_s(X_r^j) \Big) \Big( \prod_{j : q_j = 0} (1 - u_s(X_r^j)) \Big) \Bigg] \rmd s. \label{B_spde}
\end{align}
Likewise,
\begin{align}
\bar{B}_s(u_r) &= \sum_{k = 1}^{ \infty} \Big( \frac{3}{4} \Big)^{k - 1} \frac{1}{4} \bbE\Bigg[\sum_{i = 1}^{I_{s-}} \sum_{q \in \tau^{I_{s-} - 1}} \sum_{\ell = 0}^{\eta_k} \sum_{\tilde{q} \in \tau_{\ell}^{\eta_k}} V_{s-}^{I_{s-}}(M_{i, k} (q \oplus \tilde{q})) \Big( \prod_{j \neq I_{r-} : q_j = 1} u_r(X_{s-}^{j + \mathds{1}\{j \geq i\}}) \Big) \notag \\
&\phantom{= \sum_{k = 1}^{ \infty} \frac{1}{4} \Big( \frac{3}{4} \Big)^{k - 1} \bbE\Bigg[} \times \Big( \prod_{j \neq I_{s-} : q_j = 0} (1 - u_r(X_{s-}^{j + \mathds{1}\{j \geq i\}})) \Big) u_r(X_{s-}^i)^{\ell} (1 - u_r(X_{s-}^i))^{\eta_k - \ell}\Bigg] \notag \notag \\
&\phantom{=} - \bbE\Bigg[ I_{s-} \sum_{q \in \tau^{I_{s-}}} V_{s-}^{I_{s-}}(q) \Big( \prod_{j : q_j = 1} u_r(X_{s-}^j) \Big) \Big( \prod_{j : q_j = 0} (1 - u_r(X_{s-}^j)) \Big) \Bigg]. \label{B_ips}
\end{align}
By \eqref{merger_operator}, for $\tilde{q} \in \tau_{\ell}^{\eta_k}$ we have $V_{s-}^{I_{s-}}(M_{i, k}(q \oplus \tilde{q})) = 1$ exactly when $V_{s-}^{I_{s-}}(q^{1:(i - 1)} \oplus_1 1 \oplus q^{i:I_{s-}}) = 1$ and $\mathds{1}\{\tilde{b}_k(\ell / \eta_k) \geq 1/2\}$, or when $V_{s-}^{I_{s-}}(q^{1:(i - 1)} \oplus_1 0 \oplus q^{i:I_{s-}}) = 1$ and $\mathds{1}\{\tilde{b}_k(\ell / \eta_k) < 1/2\}$.
Otherwise $V_{s-}^{I_{s-}}(M_{i, k}(q \oplus \tilde{q})) = 0$.
Hence the nonzero terms in \eqref{B_spde} and \eqref{B_ips} pair up exactly, and by Fubini's theorem,
\begin{equation*}
    \int_0^T \int_0^{T - r} [ B_s^0(X_r^{1:I_r}, V_r^{I_r}) - \bar{B}_s(u_r)] \rmd s \rmd r = 0.
\end{equation*}

The argument for the second term on the right-hand side of \eqref{eps_to_zero} is similar.
Using
\begin{align}
    u_t(x) (1 - u_t(x)) &= u_t(x) - u_t(x)^2, \notag \\
    u_t(x) (1 - u_t(x)) &= (1 - u_t(x)) - (1 - u_t(x))^2, \label{c2}
\end{align}
and \cite[Lemma 2]{athreya/tribe:2000} to justify the $\varepsilon \to 0$ limit, $C_s^0(X_r^{1:I_r}, V_r^{I_r})$ can be written as
\begin{align}
&C_s^0(X_r^{1:I_r}, V_r^{I_r}) \rmd s \notag \\
&= \bbE\Bigg[ \Bigg( \sum_{q \in \tau^{I_r}} V_r^{I_r}(q) \sum_{i \neq j : q_i = q_j = 1} \Big( \prod_{k \neq i, j : q_k = 1} u_s(X_r^k) \Big) \Big( \prod_{k : q_k = 0} (1 - u_s(X_r^k) ) \Big) u_s(X_r^i) \notag \\
&\phantom{= \bbE\Bigg[ \Bigg(} - \sum_{q \in \tau^{I_r}} V_r^{I_r}(q) \sum_{i \neq j : q_i = q_j = 1} \Big( \prod_{k \neq i, j : q_k = 1} u_s(X_r^k) \Big) \Big( \prod_{k : q_k = 0} (1 - u_s(X_r^k) ) \Big) u_s(X_r^i)^2 \notag \\
&\phantom{= \bbE\Bigg[ \Bigg(} + \sum_{q \in \tau^{I_r}} V_r^{I_r}(q) \sum_{i \neq j : q_i = q_j = 0} \Big( \prod_{k : q_k = 1} u_s(X_r^k) \Big) \Big( \prod_{k \neq i, j : q_k = 0} (1 - u_s(X_r^k) ) \Big) (1 - u_s(X_r^i)) \notag \\
&\phantom{= \bbE\Bigg[ \Bigg(} - \sum_{q \in \tau^{I_r}} V_r^{I_r}(q) \sum_{i \neq j : q_i = q_j = 0} \Big( \prod_{k : q_k = 1} u_s(X_r^k) \Big) \Big( \prod_{k \neq i, j : q_k = 0} (1 - u_s(X_r^k) ) \Big) (1 - u_s(X_r^i))^2 \notag \\
&\phantom{= \bbE\Bigg[ \Bigg(} - \sum_{q \in \tau^{I_r}} V_r^{I_r}(q) \sum_{i \neq j : q_i = 1, q_j = 0} \Big( \prod_{k : q_k = 1} u_s(X_r^k) \Big) \Big( \prod_{k : q_k = 0} (1 - u_s(X_r^k) ) \Big) \Bigg) \rmd L_s^{i, j} \Bigg] \notag \\
&= \bbE\Bigg[ \sum_{q \in \tau^{I_r}} V_r^{I_r}(q) \sum_{i \neq j : q_i = q_j = 1} \Big( \prod_{k \neq j : q_k = 1} u_s(X_r^k) \Big) \Big( \prod_{k : q_k = 0} (1 - u_s(X_r^k) ) \Big) \rmd L_s^{i, j} \Bigg] \notag \\
&\phantom{=} + \bbE\Bigg[\sum_{q \in \tau^{I_r}} V_r^{I_r}(q) \sum_{i \neq j : q_i = q_j = 0} \Big( \prod_{k : q_k = 1} u_s(X_r^k) \Big) \Big( \prod_{k \neq j : q_k = 0} (1 - u_s(X_r^k) ) \Big) \rmd L_s^{i, j} \Bigg] \notag \\
&\phantom{=} - \bbE\Bigg[ I_r (I_r - 1) \sum_{q \in \tau^{I_r}} V_r^{I_r}(q) \Big( \prod_{j : q_j = 1} u_s(X_r^j) \Big) \Big( \prod_{j : q_j = 0} (1 - u_s(X_r^j) ) \Big) \rmd L_s^{i, j} \Bigg], \label{C_spde}
\end{align}
while 
\begin{align}
&\rmd \bar{C}_s(u_r) \notag \\
&= \bbE\Bigg[ \Bigg( \sum_{i = 1}^{I_{s-}} \sum_{j \neq i} \sum_{q \in \tau^{I_{s-} - 1}} V_{s-}^{I_{s-}}(S_{i,j} q) \Big( \prod_{k < I_{r-} - 1 : q_k = 1} u_r(X_{s-}^{k + \mathds{1}\{k \geq i \wedge j\} + \mathds{1}\{k \geq i \vee j\}}) \Big) \notag \\
&\phantom{= \bbE\Bigg[ \Bigg( \sum_{i = 1}^{I_{s-}} \sum_{j \neq i} \sum_{q \in \tau^{I_{s-} - 1}} V_{s-}^{I_{s-}}(S_{i,j} q)} \times \Big( \prod_{k < I_{s-} - 1 : q_k = 0} (1 - u_r(X_{s-}^{k + \mathds{1}\{k \geq i \wedge j\} + \mathds{1}\{k \geq i \vee j\}})) \Big) \notag \\
&\phantom{= \bbE\Bigg[ \Bigg( \sum_{i = 1}^{I_{s-}} \sum_{j \neq i} \sum_{q \in \tau^{I_{s-} - 1}} V_{s-}^{I_{s-}}(S_{i,j} q)} \times u_r(X_{s-}^i)^{\mathds{1}\{q_{I_{s-} - 1} = 1\}} (1 - u_r(X_{s-}^i))^{\mathds{1}\{q_{I_{s-} - 1} = 0\}} \notag \\
&\phantom{= \bbE\Bigg[ \Bigg(} - I_{s-} (I_{s-} - 1) \sum_{q \in \tau^{I_{s-}}} V_{s-}^{I_{s-}}(q) \Big( \prod_{j : q_j = 1} u_r(X_{s-}^j) \Big) \Big( \prod_{j : q_j = 0} (1 - u_r(X_{s-}^j)) \Big) \Bigg) \rmd L_s^{i, j} \Bigg]. \label{C_ips}
\end{align}
By \eqref{split_operator}, the sum over $q \in \tau^{I - 1}$ corresponds exactly to summing over those $q \in \tau^I$ in which two elements coincide.
Hence, as before, the terms in \eqref{C_spde} and \eqref{C_ips} pair up exactly, and by Fubini's theorem,
\begin{equation*}
    \int_0^T [g(r, 0, 0) - g(0, r, 0)] \rmd r = \frac{\nu}{2} \int_0^T \int_0^{T - r} [ C_s^0(X_r^{1:I_r}, V_r^{I_r}) \rmd s - \rmd \bar{C}_s(u_r)]  \rmd r = 0.
\end{equation*}
\end{proof}

\begin{rmk}
It is tempting to repeat the proof of Theorem \ref{thm:bernoulli} for a more general SPDE of the form 
\begin{equation*}
\partial_t u = \frac{1}{2} \Delta u + \mu (\tilde{b}(u) - u) + \sqrt{\nu (\sigma(u) - u^2)} \dot{W},
\end{equation*}
following the coordinated branching used by \cite{athreya/tribe:2000} to construct interacting particle duals for noise coefficients.
However, the analogue of \eqref{c2} required for a dual interpretation akin to ours to hold is
\begin{equation*}
\sigma(u) - u^2 = 1 - \sigma(u) - (1 - u)^2,
\end{equation*}
which necessitates $\sigma(u) = u$.
Thus, it does not seem possible to extend our voting mechanism to equations more general than \eqref{spde}.
\end{rmk}
\section{Proof of Theorem \ref{theo:main theorem} for Baire class-1 drifts}\label{theo:Baire 1}
\begin{defn}
    We recall that Baire class-1 functions are defined to be those functions which are the pointwise limit of a sequence of continuous functions.
\end{defn}

We now weaken the assumption that $\tilde{b}$ in \eqref{spde} is continuous and polynomially bounded, instead assuming only that it's a bounded, measurable and Baire class-1 function $\tilde b:[0,1]\ra [0,1]$. Once we have accomplished this, we will apply it to establish Theorem \ref{theo:main theorem} for Baire class-1 drifts. 

Since $\tilde{b}:[0,1]\ra [0,1]$ is Baire class-1, we can take a sequence of continuous and polynomially bounded $\tilde b^m:[0,1]\ra [0,1]$ converging pointwise to $b$, and each $\tilde b^m$ induces a dual branching-coalescing voter model.

In the following, $u_t$ is a solution of \eqref{spde} with one of the chosen boundary conditions (i)-(iv), for the fixed drift $b(u)=\mu(\tilde{b}(u)-u)$, and $(X_t^{m,1:I_t(m)}, V_t^{m,I_t(m)})$ is the branching-coalescing particle system corresponding to $\tilde b^m$.
The constant $\mu>0$ is unchanged.
\begin{theo}\label{thm:bernoulli general drift}
Let $\tilde{b} : [0, 1] \to [0, 1]$ in \eqref{spde} be bounded, measurable and Baire class-1. For $N \in \mathbb{N}$, $x^{1:N} \in [0, 1]^N, v^N \in \{0, 1\}^{2^N}$, and any measurable $u_0 : [0, 1] \to [0, 1]$, we have the following limit,
\begin{align}
&\bbE[ \langle H(u_t; x^{1:N}), v^N \rangle ] \notag \\
&= \lim_{m\ra\infty}\bbE[\langle H(u_0; X_t^{m,1:I_t(m)}), V_t^{m,I_t(m)} \rangle | (X_0^{m,1:I_0(m)}, V_0^{m,I_0(m)}) = (x^{1:N}, v^N)]. \label{main_eq limiting}
\end{align}
\end{theo}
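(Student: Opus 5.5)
The plan is to rerun the proof of Theorem \ref{thm:bernoulli} with a \emph{mismatched} pair: $u$ is a solution of \eqref{spde} with the Baire class-1 drift $\mu(\tilde b(u)-u)$, and $(X_t^{m,1:I_t(m)},V_t^{m,I_t(m)})$ is the dual built from the continuous, polynomially bounded $\tilde b^m$. Take the two independent and define $g_m(t,s,\varepsilon)$ exactly as $g$, with the $m$-th dual in place of the original one. By Remark \ref{rmk:where b enters proof from SPDE}, $\tilde b$ enters only through the term $B^\varepsilon_s$ in \eqref{main_lhs}. The computation for the $A$-terms (Laplacian) goes through verbatim, since it uses neither drift. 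The computation for the $C$-terms (noise versus coalescence) also goes through verbatim, since it only uses \eqref{c2} and \citet[Lemma 2]{athreya/tribe:2000}. The integrability needed for Fubini comes from the exponential moment of $I_t(m)$ \citep[Proposition 2.1]{barnesetal:2025}, which holds for each fixed $m$ because every $\eta_k$ is finite.

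In the $B$-term, the pairing of \eqref{B_spde} with \eqref{B_ips} now uses \eqref{bf1} for $\tilde b^m$ on the dual side. The cancellation is therefore no longer exact, and a residual remains in which $\tilde b^m(u_s(X_r^{m,i}))$ is replaced by $\tilde b(u_s(X_r^{m,i}))-\tilde b^m(u_s(X_r^{m,i}))$, together with the analogous term with $1-\tilde b$. This gives
\[
\int_0^T\big[g_m(r,0,0)-g_m(0,r,0)\big]\rmd r=\mu\int_0^T\!\!\int_0^{T-r}R^m_{s,r}\,\rmd s\,\rmd r .
\]
Here $R^m_{s,r}$ is an expectation of $\sum_i\sum_q V(q)\prod(\cdots)\,(\tilde b-\tilde b^m)(u_s(X_r^{m,i}))$, with signs. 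For each $i$, the weights $\prod_{j\ne i}(\cdots)$ summed over $q$ form a probability vector and $V\in\{0,1\}$. Hence
\[
|R^m_{s,r}|\le 2\,\bbE\Big[\sum_{i=1}^{I_r(m)} f^m_s(X_r^{m,i})\Big],\qquad f^m_s(x):=\bbE\big[|\tilde b-\tilde b^m|(u_s(x))\big]\in[0,1].
\]
Here I use the independence of $u$ and the dual and condition on the dual. Since $\tilde b^m\to\tilde b$ pointwise, bounded convergence gives $f^m_s(x)\to0$ for every $s$ and $x$. Once the residual is shown to vanish as $m\to\infty$, the $T$-integrated identity gives \eqref{main_eq limiting} for a.e.\ $t$ (and for every $t$ by continuity of the left-hand side). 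Equivalently, one can redo the differentiation in $T$ that recovers the pointwise-in-$t$ statement in Theorem \ref{thm:bernoulli}. This is legitimate because $t\mapsto\bbE[\langle H(u_t;x^{1:N}),v^N\rangle]$ is continuous and the dual side is right-continuous in $t$.

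The main obstacle is showing $\bbE[\sum_i f^m_s(X_r^{m,i})]\to0$, since both the integrand and the law of the dual depend on $m$. The dual for $\tilde b^m$ may branch into very many particles, because the $\eta_k=\eta_k(m)$ can grow with $m$. My first attempt is the following. Every particle alive at time $r$ traces back, through branchings (which occur at the parent's location) and coalescences (where the survivor continues the Brownian path), to a continuous Brownian path started from some $x^j$. Its position at time $r$ is therefore that Brownian motion evaluated at a time $r$, although the identity of the particle is random. I would then truncate. On $\{I_r(m)\le K\}$, bound the sum by $K\sum_{j\le N}\sup$ over the finitely many ancestral-path choices. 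More cleanly, stop at the first $K$ branching events and use that, between events, positions are independent Brownian motions with bounded heat-kernel densities $p(r,\cdot,\cdot)$, so that each contribution is at most $\int p(r,x^j,y)f^m_s(y)\,\rmd y\to0$ by dominated convergence. On the complement, use a tail bound $\sup_m\bbP(I_r(m)>K)\to0$ together with a uniform-in-$m$ bound on $\bbE[I_r(m)]$.

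These uniform estimates should follow from the fast coming down from infinity of \cite{barnesetal:2025}. Their bounds, of the type $I\lesssim 1/\sqrt{t}$ after an explosion, are insensitive to the offspring law, so they should hold for every $m$ even if the effective offspring law is non-integrable in the limit. The resulting estimates degrade as $r\downarrow0$ only like $r^{-1/2}$, which is integrable in $\rmd r$, so dominated convergence in $(s,r)$ finishes the proof. If the uniform estimates turn out to be unavailable, the fallback is to choose the approximations $\tilde b^m$ with a common upper bound on their factory offspring laws. This can be arranged via the construction in Lemma \ref{lem:equivalence of different formulations of b} with a common $\mu$, after which the $I_r(m)$ can be stochastically dominated by a single $m$-independent branching-coalescing system.
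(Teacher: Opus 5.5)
Your set-up is the paper's: you pair $u$ (drift $\tilde b$) with the $\tilde b^m$-dual, note that only the $B$-terms fail to cancel, and bound the residual by $2\,\bbE[\sum_{i\le I_r(m)}f^m_s(X_r^{m,i})]$. The gap is at the step you flag as the main obstacle: neither of your routes controls this quantity uniformly in $m$.

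In the truncation route, the per-particle bound $\int p(r,x^j,y)f^m_s(y)\,\rmd y$ is not justified. The particles alive at time $r$ are selected by coalescence, and hence by their own paths. Moreover, a single branching event already produces $\eta_k(m)$ lineages, with no bound uniform in $m$, so capping the number of branching events does not cap the number of lineages. The honest bound (couple with the non-coalescing branching Brownian motion and use many-to-one) costs a factor equal to that system's mean population size. That factor is not uniform in $m$ and may be infinite.

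The uniform bounds on $I_r(m)$ are only asserted, from the heuristic that the estimates of \citet{barnesetal:2025} are insensitive to the offspring law. The fallback does not supply them either. Lemma \ref{lem:equivalence of different formulations of b} with a common $\mu$ fixes only the branching rate, which is already common to all $m$. The sizes $\eta_k$ come from the Keane--O'Brien factory for $\tilde b^m$ and are not controlled as $m\to\infty$. Finally, dominating only the counts $I_r(m)$ would still leave the positions uncontrolled.

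The missing idea is that no finite common bound is needed. Let $\tilde{X}$ be the branching-coalescing Brownian motion in which each particle branches into \emph{infinitely} many particles at rate $\mu$, with the same coalescence mechanism. By \citet[Theorem 1.4]{barnesetal:2025} it is well defined with $\tilde{\bbE}[\int_0^T\tilde{I}_r\,\rmd r]<\infty$. It dominates every $m$-dual at the level of configurations: couple them so that the particles of $X^m$ form a subset of those of $\tilde{X}$. The $\eta_k$ children are taken to be $\eta_k$ of the infinitely many, and when an $X^m$-particle meets an extra particle, the $X^m$-particle survives.

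Since the summand is nonnegative, $\bbE[\sum_i f^m_s(X_r^{m,i})]\le\tilde{\bbE}[\sum_{i\le\tilde{I}_r}f^m_s(\tilde{X}^i_r)]$, and the particle system no longer depends on $m$. On $\{\tilde{I}_r<\infty\}$ the inner expression is a finite sum of terms tending to $0$ by pointwise convergence alone, and it is dominated by $\tilde{I}_r$. Dominated convergence in $(\tilde\omega,s,r)$ then finishes the proof, with no heat-kernel densities, truncation or tail estimates. This is the paper's argument.

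A smaller omission: passing to $\varepsilon=0$ in the $B$-term can no longer use continuity of $\tilde b\circ u_s$. The paper invokes Lebesgue's differentiation theorem at that point.
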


\begin{proof}[Proof of Theorem \ref{thm:bernoulli general drift}]
The proof strategy will be a modification of Theorem \ref{thm:bernoulli}.
We will consider the difference in the two sides of \eqref{main_eq} (for fixed $m$), and show this converges to $0$ as $m\ra \infty$.
We emphasise that (a) the solution of the SPDE is for the fixed drift $b(u)=\mu(\tilde{b}(u)-u)$, and (b) this is only a statement about convergence of the value of the duality expressions---we will not use anything about convergence of the SPDE or dual as processes.

The equation \eqref{main_lhs} is unchanged, and we recall it as
\begin{align}
&\bbE[\langle H(u_t^{\varepsilon}; x^{1:N}), v^N \rangle] - \langle H(u_0^{\varepsilon}; x^{1:N}), v^N \rangle \notag \\
&=: \int_0^t \Big( A_s^{\varepsilon}(x^{1:N}, v^N) + \mu B_s^{\varepsilon}(x^{1:N}, v^N) + \frac{\nu}{2}C_s^{\varepsilon}(x^{1:N}, v^N) \Big) \rmd s. \label{eq:recollection expression from SPDE Ito}
\end{align}
We define $B^{m,\epsilon}_s$ to be $B^\epsilon_s$ defined in terms of the solution $u_s$ as before, but with $\tilde{b}$ replaced by $\tilde{b}^m$:
\begin{align*}
&B^{m,\varepsilon}_s := \bbE\Bigg[\sum_{q \in \tau^N} v^N(q) \sum_{i = 1}^N (-1)^{\mathds{1}\{q_i = 0\}} \Big( \prod_{j \neq i : q_j = 1} u_s^{\varepsilon}(x^j) \Big) \Big( \prod_{j \neq i : q_j = 0} (1 - u_s^{\varepsilon}(x^j)) \Big) \\
&\phantom{:= + \mu \bbE\Bigg[\sum_{q \in \tau^N} v^N(q) \sum_{i = 1}^N (-1)^{\mathds{1}\{q_i = 0\}} \Big( \prod_{j \neq i : q_j = 1} u_s^{\varepsilon}(x^j) \Big)} \times ( (\tilde{b}^m \circ u_s)^{\varepsilon}(x^i) - u_s^{\varepsilon}(x^i) ) \Bigg]
\end{align*}
The expressions $A_s^\epsilon$ and $C_s^{\epsilon}$ don't depend on $m$, so are unchanged.
Moreover \eqref{main_rhs} becomes
\begin{align*}
&\bbE[\langle H(h; X_t^{1:I_t}), V_t^{I_t} \rangle | (X_0^{m,1:N}, V_0^N) = (x^{1:N}, v^N)] - \langle H(h; x^{k,1:N}), v^N \rangle \\
&=: \int_0^t \Big( \bar{A}_s^m(h) + \mu \bar{B}^m_s(h) \Big) \rmd s + \frac{\nu}{2} \int_0^t \rmd \bar{C}^m_s(h),
\end{align*}
where $\bar{A}_s^m$, $\bar{B}^m_s(h)$ and $\bar{C}^m_s(h)$ depend on $m$ via the voter model.

Now allowing $g(t,s,\varepsilon)$ to depend on $m$ via the voter model but with $u_t$ fixed, and writing this as $g_m$, \eqref{process_difference} becomes
\begin{align}
    &\int_0^T [g_m(r, 0, \varepsilon) - g_m(0, r, \varepsilon)] \rmd r \notag \\
    &= \int_0^T \int_0^{T - r} [ A_s^{\varepsilon}(X_r^{m, 1:I_r(m)}, V_r^{m, I_r(m)}) - \bar{A}^m_s(u_r^{\varepsilon})] \rmd s \rmd r \notag \\
    &\phantom{=}+ \mu \int_0^T \int_0^{T - r} [ B_s^{m,\varepsilon}(X_r^{m, 1:I_r(m)}, V_r^{m, I_r(m)}) - \bar{B}^m_s(u_r^{\varepsilon})] \rmd s \rmd r \notag \\
    &\phantom{=} + \frac{\nu}{2} \int_0^T \int_0^{T - r} [ C_s^{\varepsilon}(X_r^{m, 1:I_r(m)}, V_r^{m, I_r(m)}) \rmd s - \rmd \bar{C}^m_s(u_r^{\varepsilon})] \rmd r\notag \\
    &\phantom{=} + \mu \int_0^T \int_0^{T - r} [ B_s^{\varepsilon}(X_r^{m, 1:I_r(m)}, V_r^{m, I_r(m)}) - B_s^{m,\varepsilon}(X_r^{m, 1:I_r(m)}, V_r^{m, I_r(m)})] \rmd s \rmd r \label{eq:Baire class-1 appearance of extra term}. 
\end{align}
We recall (see Remark \ref{rmk:where b enters proof from SPDE}) that \eqref{main_lhs}, also recalled in \eqref{eq:recollection expression from SPDE Ito}, was the one place where $\tilde b$ enters the proof of Theorem \ref{thm:bernoulli} by virtue of being the drift term in \eqref{spde}. 
Therefore if we replace $\tilde{b}$ with $\tilde{b}^m$ in the proof of equation \eqref{eq:Baire class-1 appearance of extra term}, then $B^{m,\epsilon}_s(X^{m, 1:I_r(m)}_r,V^{m, I_r(m)}_r)$ becomes $B^{\epsilon}_s(X^{m, 1:I_r(m)}_r,V^{m, I_r(m)}_r)$, but $\bar B^m_s(u^{\epsilon}_r)$ is unchanged. 
The difference,
\[
\int_0^T \int_0^{T - r} [ B_s^{\varepsilon}(X_r^{m, 1:I_r(m)}, V_r^{m, I_r(m)}) - \bar{B}^m_s(u_r^{\varepsilon})] \rmd s \rmd r,
\]
is then the sum of the second and fourth (last) terms on the right-hand side of \eqref{eq:Baire class-1 appearance of extra term}.

By the continuity of $u$ and $u^{\varepsilon}$ and Lebesgue's differentiation theorem, we have the following almost sure convergence for every $N \in \bbN$, every $q\in \tau^N$, and almost every $(x^1,\ldots,x^N)\in \Rm^N$:
\begin{align}
    &\sum_{q \in \tau^N} v^N(q) \sum_{i = 1}^N (-1)^{\mathds{1}\{q_i = 0\}} \Big( \prod_{j \neq i : q_j = 1} u_s^{\varepsilon}(x^j) \Big) \Big( \prod_{j \neq i : q_j = 0} (1 - u_s^{\varepsilon}(x^j)) \Big)  ( (\tilde{b} \circ u_s)^{\varepsilon}(x^i) - u_s^{\varepsilon}(x^i) )\label{almost sure convergence for replacing b}\\
    &\overset{\varepsilon \to 0}{\ra} \sum_{q \in \tau^N} v^N(q) \sum_{i = 1}^N (-1)^{\mathds{1}\{q_i = 0\}} \Big( \prod_{j \neq i : q_j = 1} u_s(x^j) \Big) \Big( \prod_{j \neq i : q_j = 0} (1 - u_s(x^j)) \Big)  ( (\tilde{b} \circ u_s)(x^i) - u_s(x^i) ).\notag
    \end{align}
This implies, by the bounded convergence theorem, that
\begin{equation*}
\lim_{\varepsilon \to 0}\int_0^T\int_0^{T-r}B_{s}^{\varepsilon}(X_r^{m, 1:I_r(m)}, V_r^{m, I_r(m)})\rmd s \rmd r = \int_0^T\int_0^{T-r}B_{s}^{0}(X_r^{m, 1:I_r(m)}, V_r^{m, I_r(m)}) \rmd s \rmd r.
\end{equation*}
Note that the same convergence in \eqref{eps_to_zero} follows simply using the continuity of the drift $b$, which justifies the same convergence for $B^{m, \varepsilon}_s$ here.
We are using \eqref{almost sure convergence for replacing b} to deal with the lack of continuity of $\tilde{b}$. 
We can therefore take $\varepsilon\ra 0$ as in the proof of Theorem \ref{thm:bernoulli} after  \eqref{eps_to_zero}, and rewrite the last term of \eqref{eq:Baire class-1 appearance of extra term} to obtain
\begin{align*}
    &\lim_{\varepsilon\ra 0}\int_0^T [g_m(r, 0, \varepsilon) - g_m(0, r, \varepsilon)] \rmd r \\
    &= \mu \int_0^T \int_0^{T - r} [ B_s^0(X_r^{m, 1:I_r(m)}, V_r^{m, I_r(m)}) - B_s^{m, 0}(X_r^{m, 1:I_r(m)}, V_r^{m, I_r(m)})] \rmd s \rmd r,
\end{align*}
where, from the expression in \eqref{main_lhs}, we have
\begin{align*}
& B^0_s (x^{1:N}, v^N) - B_s^{0,k }(x^{1:N}, v^N)\\
&= \mu \bbE\Bigg[\sum_{q \in \tau^N} v^N(q) \sum_{i = 1}^N (-1)^{\mathds{1}\{q_i = 0\}} \Big( \prod_{j \neq i : q_j = 1} u_s(x^j) \Big) \Big( \prod_{j \neq i : q_j = 0} (1 - u_s (x^j)) \Big) ( (\tilde{b}-\tilde{b}^k )\circ u_s) (x^i) \Bigg].
\end{align*}
We now observe that
\[
\begin{split}
&\Bigg\lvert \sum_{q \in \tau^N} v^N(q) \sum_{i = 1}^N (-1)^{\mathds{1}\{q_i = 0\}} \Big( \prod_{j \neq i : q_j = 1} u_s(x^j) \Big) \Big( \prod_{j \neq i : q_j = 0} (1 - u_s (x^j)) \Big) ( (\tilde{b}-\tilde{b}^m )\circ u_s) (x^i) \Bigg\rvert\\
&\leq \sum_{i=1}^N\lvert (\tilde{b}-\tilde{b}^m )\circ u_s) (x^i)\rvert \sum_{q\in \tau^N} \Big( \prod_{j \neq i : q_j = 1} u_s(x^j) \Big) \Big( \prod_{j \neq i : q_j = 0} (1 - u_s (x^j)) \Big)\\
&=2\sum_{i=1}^N\lvert (\tilde{b}-\tilde{b}^m )\circ u_s) (x^i)\rvert [u_s(x^j)+1-u_s(x^j)]^{N-1}=2\sum_{i=1}^N\lvert ((\tilde{b}-\tilde{b}^m )\circ u_s) (x^i)\rvert.
\end{split}
\]
Therefore
\begin{equation}\label{eq:difference of gk in term of duals}
\int_0^T[g_m(r,0,0)-g_m(0,r,0)]\rmd r \leq 2\mu\expE\Bigg[\int_0^T\int_0^{T-r}\sum_{i=1}^{I_r(m)}\lvert ((\tilde{b}-\tilde{b}^m )\circ u_s) (X^{m, i_r})\rvert \rmd s \rmd r\Bigg].
\end{equation}
We now let $\tilde{X}^{1:\tilde{I}_t}_t=(\tilde{X}^1_t,\ldots,\tilde{X}_t^{\tilde{I}_t})$ be a branching-coalescing Brownian motion where at rate $\mu$, each particle branches into infinitely many particles, carried on an entirely new probability space $(\tilde {\Omega},\tilde{\mathcal{F}},\tilde{\Pm})$.
This is well-defined by \cite[Theorem 1.4]{barnesetal:2025}.
We write $( \Omega,\mathcal{F},\Pm)$ for our original probability space carrying our solution $u_t$. Then since $\tilde{X}$ stochastically dominates the branching-coalescing Brownian motion in our dual for any $m$, and the latter is independent of $u_t$, we obtain from \eqref{eq:difference of gk in term of duals} that
\begin{equation}\label{eq:inequality gk bounded infinte bbm}
  \int_0^T [g_m(r,0,0)-g_m(0,r,0)]\rmd r\leq 2\mu\expE \Bigg[\tilde\expE\Bigg[\int_0^T\int_0^{T-r}\sum_{i=1}^{\tilde{I}_r}\lvert ((\tilde{b}-\tilde{b}^m )\circ u_s) (\tilde{X}^i_r)\rvert \rmd s \rmd r\Bigg]\Bigg].  
\end{equation}
Now for every $\omega\in \Omega$, $\tilde{\omega}\in \tilde{\Omega}$ and $r,s>0$ with $\tilde{I}_s(\tilde{\omega})<\infty$, 
\begin{equation}\label{eq:sure convergence for extension}
\lim_{m \to \infty} \sum_{i=1}^{\tilde{I}_r(\tilde\omega)}\lvert ((\tilde{b}-\tilde{b}^m )\circ u_s(\omega)) (\tilde{X}^i_r(\tilde\omega))\rvert = 0
\end{equation}
as $k\ra\infty$, since $\tilde{b}^m \ra \tilde{b}$ pointwise.
The left-hand side is also clearly bounded by $\tilde{I}_r$.
Therefore to apply the dominated convergence theorem to see that \eqref{eq:inequality gk bounded infinte bbm} converges to $0$ as $m\ra\infty$, all that remains is to check that
\[
\tilde{\expE}\Big[\int_0^{T}\tilde{I}_r \rmd r\Big]<\infty.
\]
This was proven in \citet[Theorem 1.4]{barnesetal:2025}.
\end{proof}

\subsection*{Uniqueness in law}

We recall that $b$ is Baire class-1 and satisfies $-Cu\leq b(u)\leq C(1-u)$ for some $C<\infty$. Then taking $\mu:=C$ and $\tilde{b}(u):=b(u)/C+u$, we see that $\tilde{b}:[0,1]\ra [0,1]$ is Baire class-1 and $b(u)=\mu(\tilde{b}(u)-u)$.

Now choosing $v^N(q) = 1$ when all entries of $q$ are 1, and $v^N(q) = 0$ otherwise, yields the moment $\bbE[u_t(x^1) \ldots u_t(x^N)]$ on the left-hand side of \eqref{main_eq limiting}. Hence Theorem \ref{thm:bernoulli} characterises moments of \eqref{spde}, and weak uniqueness holds by \citet[Lemma 1]{athreya/tribe:2000}.

\subsection*{Existence of weak solutions}

We will firstly need the following tightness lemma, which extends \cite[Lemma A.6]{Fan2023} to bounded, measurable drifts.
This section is concerned with Baire class-1 drifts, but we will implicitly use the following lemma in Section \ref{sec:measurable} when we deal with only measurable drifts.
Hence we will establish it assuming only boundedness and measurability. Whilst we are confident it was previously known, we couldn't find a reference;  \cite[Lemma A.6]{Fan2023} takes the drift to be of the form $\beta u(1-u)$ for some $\beta\in\Rm$.
Our proof amounts to simply checking this isn't important. 

In the following, $C_{b,\loc}(\Rm_{>0}\times E;[0,1])$ is the space of \textit{globally} bounded and continuous functions $\Rm_{>0}\times E\ra [0,1]$, equipped with the topology of \textit{locally} uniform convergence on compact sets.

\begin{lem}\label{lem:tightness}
Let $\nu>0$ and $C<\infty$ be fixed. Then the set of all solutions $u(t,x)$ to \eqref{spde}, for all possible initial conditions and all measurable drifts $b:E\times \Rm\ra [0,1]$ with $\lvert \lvert b\rvert\rvert_{\infty}\leq C$, is tight in $C_{b,\loc}(\Rm_{>0}\times E;[0,1])$.
\end{lem}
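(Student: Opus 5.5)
We use the mild formulation \eqref{E:MildSol_FKPP} and split any solution as $u_t(x)=P_tu_0(x)+D_t(x)+N_t(x)$. Here $P_tu_0$ is the heat semigroup term. $D_t(x)=\int_0^t\int_E p(t-s,y,x)b(s,y,u_s(y))\,\rmd y\,\rmd s$ is the drift term, and $N_t$ is the stochastic convolution. Tightness in $C_{b,\loc}(\Rm_{>0}\times E;[0,1])$ reduces, by Arzel\`a--Ascoli and the Kolmogorov--Chentsov tightness criterion on each compact $[\delta,T]\times K$ with $\delta>0$ and $K\subset E$ compact, to uniform bounds on each of the three pieces. The values lie in $[0,1]$ automatically, so only a uniform modulus-of-continuity estimate is needed. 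The key point is that every estimate will depend on $b$ only through $\|b\|_\infty\le C$ and on $u_0$ only through $0\le u_0\le 1$. This is exactly why the argument of \cite[Lemma A.6]{Fan2023} carries over.

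\textbf{Deterministic pieces.} For the heat term, since $0\le u_0\le1$, standard heat-kernel gradient bounds give $|\partial_x p(t,y,x)|$ and $|\partial_t p(t,y,x)|$ integrable in $y$ uniformly for $t\ge\delta$. These bounds hold for each of the kernels (i)--(iv). Hence $P_tu_0(x)$ is Lipschitz on $[\delta,T]\times K$ with a constant depending only on $\delta$, and this family of functions is equicontinuous there. It is deterministic given $u_0$; for random $u_0$ the bound holds pathwise. The drift term satisfies $|D_t(x)-D_{t'}(x')|\le C\int_0^{T}\int_E|p(t-s,y,x)-p(t'-s,y,x')|\,\rmd y\,\rmd s$, with the integrand set to zero for $s>t$. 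The standard bounds $\int_E|p(r,y,x)-p(r,y,x')|\,\rmd y\lesssim 1\wedge |x-x'|/\sqrt r$ and the analogous bound in time then give a H\"older-$1/2$ modulus (up to a logarithm) depending only on $C$ and $T$. This holds pathwise and uniformly over all solutions and drifts.

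\textbf{Stochastic piece.} Here we use the Burkholder--Davis--Gundy inequality together with $\nu u(1-u)\le\nu/4$:
\[
\bbE|N_t(x)-N_{t'}(x')|^{p}\le C_p\,\nu^{p/2}\Big(\int_0^{T}\int_E (p(t-s,y,x)-p(t'-s,y,x'))^2\,\rmd y\,\rmd s\Big)^{p/2}\le C_{p,T}\big(|x-x'|^{1/2}+|t-t'|^{1/4}\big)^p .
\]
The integrand is again set to zero for $s>t$. This bound is uniform over all solutions, because it uses only the boundedness of the noise coefficient. Taking $p$ large, Kolmogorov's criterion gives tightness of $N$ in $C([0,T]\times K)$.

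Combining the three pieces, the modulus of continuity of $u$ on $[\delta,T]\times K$ is tight uniformly over the whole class. Tightness in the locally uniform topology then follows by a diagonal argument over $\delta\downarrow0$, $T\uparrow\infty$ and $K\uparrow E$. We expect the main obstacle to be the uniform heat-kernel increment estimates for the four boundary conditions, especially the Dirichlet and Neumann kernels near the boundary. We would handle these by the method of images, which reduces them to the Gaussian bounds on $\Rm$. Measurability of $b$ causes no difficulty, since $b$ enters only through $\|b\|_\infty$.
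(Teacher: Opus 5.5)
Your proposal is correct and follows essentially the same route as the paper. You split the mild formulation into heat, drift and stochastic-convolution terms, bound the first two by heat-kernel estimates that use only $0\le u_0\le 1$ and $\|b\|_\infty\le C$, bound the last by BDG with $\nu u(1-u)\le\nu/4$, and conclude by Kolmogorov and Arzel\`a--Ascoli as in \cite[Lemma A.6]{Fan2023}; your explicit restriction to $t\ge\delta$ (needed because $u_0$ is merely measurable) and pathwise treatment of the deterministic terms simply make precise what the moment bound \eqref{eq:Holder estimate} asserts on compacts of $\Rm_{>0}\times E$.
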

\begin{proof}[Proof of Lemma \ref{lem:tightness}]
    We let $u$ be an arbitrary solution to \eqref{spde}, for some admissible drift $b$. In the following, $K$ is an arbitrary compact $K\subseteq E\times [0,1]$. Then there exists $C<\infty$ that may depend upon $K$ but not on $u$ or $b$, such that, for all $(t_1,x_1),(t_2,x_2)\in K$,
    \begin{equation}\label{eq:Holder estimate}
        \expE[\lvert u(t_1,x_1)-u(t_2,x_2)\rvert^p]\leq C(\lvert t_1-t_2\rvert^{p/4}+\lvert x_1-x_2\rvert^{p/2}).
    \end{equation}
This can be found e.g.\ in \cite[Lemma 4]{Fan2021}, for regular drift, but we couldn't find a statement for general bounded drifts. Nevertheless, allowing $b$ to be a general bounded drift causes no additional problems. In particular, to obtain \eqref{eq:Holder estimate}, assume $t_2\geq t_1$, and use \eqref{E:MildSol_FKPP} to see,
\begin{align*}
&u_{t_2}(x_2)-u_{t_1}(x_1) \\
&= \int_{E} \{[p(t_2,y,x_2)-p(t_2,y,x_1)]+[p(t_2,y,x_1)-p(t_1,y,x_1)]\}\,u_0(y)\rmd y \\
&\phantom{=} + \int_0^{t_1}\int_{E}\{[p(t_2-s,y,x_2)-p(t_2-s,y,x_1)]+[p(t_2-s,y,x_1)-p(t_1-s,y,x_1)]\} b(u_s(y))\rmd y\rmd s \\
&\phantom{=} + \int_{t_1}^{t_2}p(t_2-s,y,x_2)b(u_s(y))\rmd y\rmd s \\
&\phantom{=}+ \int_{E\times [0,t_1]}\{[p(t_2-s,y,x_2)-p(t_2-s,y,x_1)]+[p(t_2-s,y,x_1)-p(t_1-s,y,x_1)]\}\\
&\phantom{= + \int_{E\times [0,t_1]}} \times \sqrt{\nu u_s(y) (1-u_s(y))}\rmd W(y,s)\\
&\phantom{=} + \int_{E\times [0,t_1]}(t_2,y,x_2)\sqrt{\nu u_s(y) (1-u_s(y))}\rmd W(y,s).
\end{align*}
Then \eqref{eq:Holder estimate} follows from heat kernel estimates, and the BDG inequality applied to the last two terms. Using this, Lemma \ref{lem:tightness} follows as in the proof of \cite[Lemma A.6]{Fan2023}.\end{proof}

As in the proof of Theorem \ref{thm:bernoulli general drift}, we assume that $\tilde{b}:[0,1]\ra [0,1]$ is Baire class-1, and take a sequence of continuous and polynomially bounded functions $\tilde{b}^m:[0,1]\ra [0,1]$ converging pointwise to $\tilde{b}$.
We write $b(u)=\mu (\tilde{b}(u)-u)$ and similarly $b^m(u)=\mu (\tilde{b}^m(u)-u)$. The existence of weak solutions to \eqref{eq:SPDE 1} follows from \cite[Theorem 2.6]{shiga:1994} for continuous drifts. Using this, we take a sequence of solutions to \eqref{spde} for each $m$, $(u^m:m\in \mathbb{N})$. The key difficulty is that we have no guarantees that $b^m(u^m(t,x))$ converges to $b(u(t,x))$, due to the discontinuity of $b$.

Then using Lemma \ref{lem:tightness}, the $u^ms$ are tight in the space of locally uniform convergence on $\Rm_{>0}\times E$. Meanwhile, identifying $L^{\infty}(\Rm_{\geq 0}\times E)$ with measures having $L^{\infty}$ densities, the $b^m(u^m(t,x))$s are tight in the topology of weak convergence of measures, with all subsequential limits supported on $L^{\infty}(\Rm_{\geq 0}\times E;[-\mu,\mu])$. Finally, taking a countable dense family $(\psi_\ell:1\leq \ell<\infty)$ in $C^{\infty}_c(\Rm_{\geq 0}\times E)$, we see that the family 
\[
M^{m,\psi_\ell}_t:=\int_0^t\int_E\psi_\ell(y,s)\sqrt{\nu u^m_s(y)(1-u^m_s(y))}\rmd W^m(y,s)
\]
are a countable collection of martingale measures with quadratic covariation 
\[
\langle M^{m,\psi_{\ell_1}},M^{m,\psi_{\ell_2}}\rangle_t=\int_0^t \int_E \psi_{\ell_1}(y)\psi_{\ell_2}(y) \nu u^m_s(y)(1-u^m_s(y)) \rmd y \rmd s.
\]
By Aldous' criterion, it follows that $\{M^{m,\psi_{\ell}}_t:1\leq m<\infty\}$ is tight in $C([0,\infty);\Rm)$ (equipped with a metrisation of uniform convergence convergence on compact time intervals), for each $1\leq \ell <\infty$. We use tightness, pass to a subsequence (which we don't denote to keep notation cleaner) and use Skorokhod's representation theorem to put all of these on a common probability space on which we have convergence of all of the above in probability. We denote the resultant subsequential limits as $u_t$, $\hat b(t,x)$, and $M^{\psi_{\ell}}_t$ for $1\leq \ell <\infty$, respectively. 

Now we see that each $M^{\psi_{\ell}}_t$ is again a martingale, with quadratic covariations satisfying 
\[
\langle M^{\psi_{\ell_1}},M^{\psi_{\ell_2}}\rangle_t=\int_0^t \int_E \psi_{\ell_1}(y)\psi_{\ell_2}(y) \nu u_s(y)(1-u_s(y))dy ds.
\]
We then construct the orthogonal martingale measure $M(s,y)$ via duality, and define
\[
W(y,s):=\frac{\Ind(0<u_s(y)<1)}{\sqrt{\nu u_s(y)(1-u_s(y))}}M(s,y)+\Ind(u_s(y)\in \{0,1\})\hat{W}(y,s),
\]
where $\hat{W}$ is some separate exogenous white noise, so that $W$ is therefore a white noise.

We obtain that $u$ satisfies
\begin{align}
\notag  u_t(x) = {}& \int_{E} p(t,y,x)\,u_0(y)\rmd y + \int_0^t\int_Ep(t-s,y,x)\hat{b}(s,y)\rmd y\rmd s  \\&+ \int_{E\times [0,t]}p(t-s,y,x)\,
\sqrt{\nu u_s(y) (1-u_s(y))}\,\rmd W(y,s), \label{eq:martingale for existence using general B}
\end{align}
for the adapted, bounded, measurable $\hat{b}:\Rm_{\geq 0}\times E\ra \Rm$ which we previously obtained as a subsequential limit.
It follows that
\begin{equation}\label{eq:Jt mean 0}
J_t:=u_t(x)-\int_Ep(t,y,x)u_0(y)dy-\int_0^t\int_Ep(t-s,y,x)\hat{b}(s,y)\rmd y \rmd s
\end{equation}
has mean $0$. Our next goal is to show, via duality, that the following has mean $0$:
\begin{equation}\label{eq:Kt mean 0}
K_t:=u_t(x)-\int_Ep(t,y,x)u_0(y)dy-\int_0^t\int_Ep(t-s,y,x)b(u_s(y))\rmd y \rmd s.
\end{equation}

Our strategy is to establish that $u_t$ satisfies \eqref{eq:recollection expression from SPDE Ito} for $\varepsilon=0$, when $B_s^{\varepsilon}$ is defined using $\tilde{b}$.
To this end, define the difference $D_t^{\varepsilon}$
\begin{align*}
D_t^\varepsilon(v^N):= {}& \bbE[\langle H(u_t^{\varepsilon}; x^{1:N}), v^N \rangle] - \langle H(u_0^{\varepsilon}; x^{1:N}), v^N \rangle \\
&- \int_0^t \Big( A_s^{\varepsilon}(x^{1:N}, v^N) + \mu B_s^{\varepsilon}(x^{1:N}, v^N) + \frac{\nu}{2}C_s^{\varepsilon}(x^{1:N}, v^N) \Big) \rmd s,
\end{align*}
for $v\in \tau^N$ and $\epsilon\geq 0$. Our goal is to show that $D_t^0=0$ for almost every $x^{1:N}\in \Rm^N$, for any $q\in \tau^N$ and $N\in \bbN$. 

We follow the proof of Theorem \ref{thm:bernoulli general drift}, allowing the voter model to be parametrised by $m$ as before but defining $u_t$ to instead be the above subsequential limit.
Then \eqref{eq:Baire class-1 appearance of extra term} becomes
\begin{align*}
    &\int_0^T [g_m(r, 0, \varepsilon) - g_m(0, r, \varepsilon)] \rmd r \notag \\
    &= \int_0^T \int_0^{T - r} [ A_s^{\varepsilon}(X_r^{m, 1:I_r(m)}, V_r^{m, I_r(m)}) - \bar{A}^k_s(u_r^{\varepsilon})] \rmd s \rmd r  \\
    &\phantom{=} + \mu \int_0^T \int_0^{T - r} [ B_s^{m,\varepsilon}(X_r^{m, 1:I_r(m)}, V_r^{m, I_r(m)}) - \bar{B}^m_s(u_r^{\varepsilon})] \rmd s \rmd r  \\
    &\phantom{=} + \mu \int_0^T \int_0^{T - r} [ B_s^{\varepsilon}(X_r^{m, 1:I_r(m)}, V_r^{m, I_r(m)}) - B_s^{m,\varepsilon}(X_r^{m, 1:I_r(m)}, V_r^{m, I_r(m)})] \rmd s \rmd r \\
    &\phantom{=} + \frac{\nu}{2} \int_0^T \int_0^{T - r} [ C_s^{\varepsilon}(X_r^{m, 1:I_r(m)}, V_r^{m, I_r(m)}) \rmd s - \rmd \bar{C}^m_s(u_r^{\varepsilon})] \rmd r  +\int_0^T D_r^{\varepsilon}(v^N)\rmd r.
\end{align*}
As we take $\varepsilon\ra 0$ and then $m\ra \infty$, all except the last term on the right-hand side converge to $0$ by following the proof of duality for Baire class-1 drifts, and the left-hand side converges to $0$ by convergence in distribution of $u^m$ to $u$.
This implies $D^{\varepsilon}_r(v^N)\ra 0$ as $\varepsilon \ra 0$.

Using \eqref{almost sure convergence for replacing b} to ensure the convergence of $B^{\epsilon}_s(x^{1:N},v^N)$ for almost every $x^{1:N}$ and $s\in [0,t]$, this implies that $D_t^0(v^N)\equiv 0$ for almost every $x^{1:N}$.
Now taking $N=1$, and $v^N=1$, this implies $K_t$ has mean $0$ for almost-every $x$, hence for all $x$ by the continuity of the heat kernel (observe that possible discontinuities of $b$ can't cause a problem here as the `$x$' is only in the heat kernel term).

By comparing \eqref{eq:Jt mean 0} and \eqref{eq:Kt mean 0}, we see that
\begin{equation*}
J_t-K_t:=\int_0^t\int_Ep(t-s,y,x)[\hat{b}(s,y)-b(u(s,y))]\rmd y\rmd s
\end{equation*}
has mean $0$. On the other hand, taking $0$ as the starting time was arbitrary, so equally
\begin{equation*}
\expE\Bigg[\int_u^t\int_Ep(t-s,y,x)(\hat{b}(s,y)-b(u(s,y)))\rmd y\rmd s\Bigg\lvert \mathcal{F}_u\Bigg]=0,
\end{equation*}
$\mathcal{F}_u$ being the sigma-algebra at time $u\in [0,t)$. This says precisely that $J_t-K_t$ is a martingale. Since it must also be a continuous process of finite variation, we conclude that $J_t\equiv K_t$, hence
\begin{equation*}
\int_0^t\int_Ep(t-s,y,x)\hat{b}(s,y)\rmd y\rmd s\equiv \int_0^t\int_Ep(t-s,y,x)b(u_s(y))\rmd y\rmd s,
\end{equation*}
almost surely. Plugging this back into \eqref{eq:martingale for existence using general B}, we are done.\qed

\section{Proof of Theorem \ref{theo:main theorem} for measurable drifts}\label{sec:measurable}

Our goal now is to weaken our assumptions on $b$ to bounded and measurable. We therefore assume that $b$ (and hence $\tilde{b}$) are only bounded and measurable.
The additional difficulty compared to before is that we can no longer take a sequence of continuous functions converging pointwise to $b$. 

Suppose that we knew $\Law(u(t,x))\ll \text{Leb}_{(0,1)}+\delta_0+\delta_1$ for all $(t,x)\in \Rm_{>0}\times E$.
By taking a sequence of continuous functions $\tilde{b}^m:[0,1]\ra [0,1]$ converging $(\text{Leb}_{(0,1)}+\delta_0+\delta_1)$-almost everywhere to $\tilde{b}$, we would ensure that $\tilde{b}^m(u(t,x))\ra \tilde{b}(u(t,x))$ almost everywhere, almost surely, and the proof as in the Baire class-1 case would proceed without other changes.
However, we have not been able to prove this.
The trick to circumvent this problem is to take a sequence of continuous functions which depend, in an appropriate way, on the solution to which we apply it.

We begin with a proof of uniqueness, before proving existence in law.
\subsection*{Uniqueness in law}
We take two solutions, $u^1_t$ and $u^2_t$, to \eqref{spde} for a fixed drift $b$ with the same initial condition, $u_0$. We then defined a bounded Borel measure $\nu$ on $[0,1]$ in terms of $u^1$ and $u^2$ as
\[
\nu(\cdot):=\sum_{i=1,2}\sum_{R\in \mathbb{N}}2^{-R}\expE\Bigg[\int_0^R\int_{[-R,R]\cap E}\delta_{u^i(t,x)}(\cdot)\rmd x \rmd t\Bigg].
\]
By Tonelli's theorem,
\begin{equation*}
    \nu(A)>0 \quad \Leftrightarrow \quad \sum_{i=1,2}\Pm(\text{Leb}(\{(t,x):u^i(t,x)\in A\})>0),
\end{equation*}
and by a straightforward application of Lusin's theorem and Tietze's extension theorem, there exists a sequence of continuous functions $\tilde{b}^m:[0,1]\ra [0,1]$ converging $\nu$-almost everywhere to $\tilde{b}$.
As a consequence, for $i=1,2$,
\begin{equation}\label{eq:convergence of drifts a.e. for special nu}
\Pm(\tilde{b}^m(u^i(t,x))\ra \tilde{b}(u^i(t,x))\quad\text{as $m\ra\infty$, for Lebesgue a.e.\ $(t,x)\in \Rm_{\geq 0}\times E$)})=1.
\end{equation}

We now take the sequence of duals $(X^{m,1:I_t(m)}_t,V^{m,I_t(m)}_t)$ corresponding to the drifts $\tilde{b}^m$. We will establish that for $N \in \mathbb{N}$, $x^{1:N} \in [0, 1]^N, v^N \in \{0, 1\}^{2^N}$, and any measurable $u_0 : [0, 1] \to [0, 1]$, we have
\begin{align}
&\bbE[ \langle H(u^i_t; x^{1:N}), v^N \rangle ] \notag \\
&= \lim_{m\ra\infty}\bbE[\langle H(u_0; X_t^{m,1:I_t(m)}), V_t^{m,I_t(m)} \rangle | (X_0^{m,1:I_0(m)}, V_0^{m,I_0(m)}) = (x^{1:N}, v^N)]. \label{main_eq limiting second}
\end{align}
This can be done by repeating the proof of Theorem \ref{thm:bernoulli general drift},  replacing the pointwise convergence of $\tilde{b}^m\circ u_s$ to $\tilde{b}\circ u_s$ with \eqref{eq:convergence of drifts a.e. for special nu}, obtaining the convergence of \eqref{eq:sure convergence for extension} almost everywhere and almost surely, and concluding $m \to \infty$ convergence as before. Thus we have \eqref{main_eq limiting second}, and uniqueness in law follows as in the Baire class-1 case.

\subsection*{Existence of weak solutions}

Given bounded and measurable $b:[0,1]\ra \Rm$, we say that $b$ possesses a solution, $u$, if the SPDE \eqref{spde} with drift $b$ possesses a weak solution, $u$. We now prove the following proposition.
\begin{lem}\label{lem:ptwise convergence gives solutions}
Suppose that $(b^n)$ is a sequence of measurable functions, each satisfying $-Cu\leq b^n(u)\leq C(1-u)$ for fixed $C<\infty$, and each possessing a solution $u^n$. Suppose also that $b^n\ra b$ pointwise. Then $b$ possesses a weak solution also.
\end{lem}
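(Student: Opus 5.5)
We follow the template of the Baire class-1 existence argument, with the continuous approximants $\tilde b^m$ replaced by the given measurable drifts $b^n$. For the measurable case we combine this with the solution-dependent approximation trick from the uniqueness proof.

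\textbf{Step 1: tightness and a limit.} Write $\tilde b^n(u)=b^n(u)/C+u$ and $\tilde b(u)=b(u)/C+u$ with $\mu=C$, so that all these functions map $[0,1]$ into $[0,1]$. By Lemma~\ref{lem:tightness}, the $u^n$ are tight in $C_{b,\loc}(\Rm_{>0}\times E;[0,1])$. The drifts $b^n(u^n)$ are bounded by $C$, so they are tight as measures with $L^\infty$ densities. The martingales $M^{n,\psi_\ell}$ are tight by Aldous' criterion. Passing to a subsequence and using Skorokhod's representation, we obtain limits $u$, $\hat b$ and $M^{\psi_\ell}$. Exactly as before, we build a white noise $W$ such that $u$ solves \eqref{eq:martingale for existence using general B} with the adapted bounded drift $\hat b(s,y)$. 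It then remains to show that $\hat b(s,y)=b(u_s(y))$, and for this it suffices, as in the Baire argument, to show that $K_t$ in \eqref{eq:Kt mean 0} has mean zero, i.e.\ $D^0_t(v^N)=0$.

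\textbf{Step 2: identifying the drift via duality.} Define $\nu(\cdot)=\sum_R 2^{-R}\expE\bigl[\int_0^R\int_{[-R,R]\cap E}\delta_{u(t,x)}(\cdot)\,\rmd x\,\rmd t\bigr]$ using the limit $u$. By Lusin's and Tietze's theorems, pick continuous, polynomially bounded $\tilde c^m\to\tilde b$ $\nu$-a.e.; then $\tilde c^m(u)\to\tilde b(u)$ a.e.\ in $(t,x)$, almost surely, as in \eqref{eq:convergence of drifts a.e. for special nu}. Run the identity \eqref{eq:Baire class-1 appearance of extra term} for $u$ against the duals driven by $\tilde c^m$, with the defect term $\int_0^T D^\varepsilon_r\,\rmd r$ included. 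The $A$ and $C$ terms cancel. The $B$ mismatch is bounded, via \eqref{eq:difference of gk in term of duals} and domination by the infinitely-branching coalescing system $\tilde X$, by
\[
2\mu\,\expE\tilde\expE\Bigl[\int\!\!\int\sum_i\bigl|(\tilde b-\tilde c^m)\circ u_s(\tilde X^i_r)\bigr|\,\rmd s\,\rmd r\Bigr].
\]
Since $\tilde X_r$ has an absolutely continuous law given its count, this tends to $0$ by dominated convergence and $\tilde\expE\int_0^T\tilde I_r\,\rmd r<\infty$.

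To make the left-hand side vanish we must compare with $u^n$. The duality for $u^n$ with drift $\tilde b^n$ and duals driven by $\tilde c^m$ produces an error of the same form, now with $|(\tilde b^n-\tilde c^m)\circ u^n_s|$. Split this as
\[
|\tilde b^n-\tilde b|\circ u^n+|\tilde b-\tilde c^m|\circ u^n .
\]
The first piece tends to $0$ by pointwise convergence, provided the sum over dual particles is handled uniformly in $n$ by dominated convergence. The second piece is the real difficulty: $\tilde c^m$ was adapted to the law of $u$, not of $u^n$.

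\textbf{Main obstacle.} The crux is controlling $|\tilde b-\tilde c^m|\circ u^n$ uniformly in $n$ as $m\to\infty$. I would first try to redefine $\nu$ to include all the $u^n$ together with $u$, namely $\nu=\sum_n 2^{-n}\nu_{u^n}+\nu_u$. This is still a finite measure, so Lusin's theorem gives continuous $\tilde c^m\to\tilde b$ $\nu$-a.e. However, a.e.\ convergence for each $n$ separately does not give uniformity in $n$. To get it, I would use Egorov's theorem with respect to $\nu$: off a set $G_m$ with $\nu(G_m)<\delta$, the convergence is uniform. Then bound the occupation of $G_m$ by $u^n$, weighted by the dual particle density. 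This requires that the occupation measures of $u^n$ on $\Rm_{>0}\times E$ converge, which holds for the $u^n$ themselves but not for arbitrary sets. So I would further need $\nu_{u^n}\to\nu_u$ setwise on $G_m$, or a uniform absolute continuity $\nu_{u^n}\ll\nu$ with uniformly integrable densities, and this is what I would try to establish.

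A second route avoids uniformity altogether. It uses instead that both sides of the identity for $u^n$ equal the exact dual expectations of $b^n$ (by uniqueness, via \eqref{main_eq limiting second}), and that these expectations converge as $n\to\infty$ by the pointwise convergence $b^n\to b$ in the dual voting rule. One would have to check that the voting function depends only on values along the dual, which seems ill-suited to non-continuous $b^n$; I would keep this route as a fallback.

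Once $D^0_t\equiv0$ is established, the martingale and finite-variation argument of the Baire case gives $\hat b=b(u)$, and this concludes the proof.
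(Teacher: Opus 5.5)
Your Step 1, and your treatment of the right-hand error for the limit $u$ against duals driven by $\tilde c^m$, match the paper's Baire-class-1 argument. There is a genuine gap, though: the left-hand side $\int_0^T[g_m(r,0,0)-g_m(0,r,0)]\,\rmd r$ is never shown to vanish.

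In the Baire case this term vanishes because the approximating solutions $u^m$ have continuous drift $\tilde b^m$. That drift is exactly the voting rule of the $m$-th dual, so their moments are \emph{exactly} the dual expectations, and $u^m\to u$. In your setup the approximating solutions $u^n$ have measurable drifts $\tilde b^n$ and no exact dual. Meanwhile your voting rule $\tilde c^m$ is tuned to $\tilde b$ along $u$. This forces you to control $|\tilde b^n-\tilde c^m|\circ u^n$ in a double limit, and neither remedy you propose closes it.

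The Egorov route needs the occupation measures of the $u^n$ to be uniformly small on sets of small $\nu$-measure. With $\nu=\sum_n 2^{-n}\nu_{u^n}+\nu_u$, the densities $\rmd\nu_{u^n}/\rmd\nu$ are only bounded by $2^n$. Any uniform version would be a regularity property of the laws of the $u^n$, of exactly the kind the paper says it cannot prove (absolute continuity of $\Law(u(t,x))$).

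Your ``easy'' piece $|\tilde b^n-\tilde b|\circ u^n$ is not handled by pointwise convergence either. The argument $u^n_s(x)$ moves with $n$, and $f_n\to 0$ pointwise does not give $f_n(y_n)\to 0$ when $y_n\to y$. Your second route fails for the reason you suspect: there is no dual with voting rule $\tilde b^n$ when $\tilde b^n$ is discontinuous.

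The missing idea is to tie the dual's voting rule to $n$ by diagonalisation, rather than using a single sequence $\tilde c^m$ for $\tilde b$.
\begin{itemize}
\item Let $\nu$ charge the occupation measures of all the $u^n$ and of $u$. For each $n$, pick continuous, polynomially bounded $\tilde b^{n,m}\to\tilde b^n$ $\nu$-a.e. as $m\to\infty$.
\item Fix $n$. The uniqueness argument \eqref{main_eq limiting second}, applied to the \emph{fixed} process $u^n$, shows that the dual expectations for $\tilde b^{n,m}$ converge to the moments of $u^n$. These dual expectations are exactly the moments of the continuous-drift solutions $u^{n,m}$. So, with tightness, $u^{n,m}\to u^n$ in law.
\item Since $b^n\to b$ \emph{pointwise}, and hence $\nu$-a.e. whatever $\nu$ turns out to be, you can choose $m_n$ (e.g.\ via $\nu(|\tilde b^{n,m_n}-\tilde b^n|>1/n)<2^{-n}$ and Borel--Cantelli) such that $\tilde b^{n,m_n}\to\tilde b$ $\nu$-a.e. and $u^{n,m_n}\to u$ in law.
\item Now run the Baire-class-1 existence argument verbatim with the continuous drifts $b^{n,m_n}$. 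The left-hand side vanishes because $u^{n,m_n}$ has an exact dual with voting rule $\tilde b^{n,m_n}$. The right-hand error involves only $|\tilde b-\tilde b^{n,m_n}|\circ u$ along the fixed limit $u$, which tends to $0$ by dominated convergence, as in your Step 2.
\end{itemize}
No uniformity in $n$ is ever needed. This is also precisely why the lemma assumes pointwise rather than merely a.e. convergence of $b^n$.
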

\begin{proof}[Proof of Lemma \ref{lem:ptwise convergence gives solutions}]
We follow and adapt the proof in the Baire class-1 case, the complicating factor being that the $u^n$s aren't continuous so don't necessarily, themselves, possess a moment dual. We take $u$ to be the subsequential limit of the $u^n$s, and define
\begin{align}
    \nu := {}& \sum_{n\geq 0}\sum_{R\in \mathbb{N}}2^{-R} \expE\Bigg[\int_0^R\int_{[-R,R]\cap E}\delta_{u^n(t,x)}(\cdot)\rmd x \rmd t\Bigg] \notag\\
    &+ \sum_{R\in \mathbb{N}} 2^{-R} \expE\Bigg[\int_0^R\int_{[-R,R]\cap E}\delta_{u(t,x)}(\cdot)\rmd x\rmd t\Bigg]. \label{eq:nu definition existence}
\end{align}
Using Lusin's theorem and Tietze's extension theorem, we obtain for each $n<\infty$ a sequence of continuous drifts, $(b^{n,m}:m\in \mathbb{N})$, such that (a) $-Cu\leq b^{n,m}\leq C(1-u)$ and $\tilde{b}^{n,m}:=u+b^{n,m}/C$ is continuous and polynomially bounded for each $m<\infty$, and (b) $b^{n,m}\ra b^n$ $\nu$-almost everywhere as $m\ra\infty$.  We let $u^{n,m}$ be the solution for drift $b^{n,m}$, which exists and is unique by the continuity of $b^{n,m}$. It follows from the above convergence (b) and the definition \eqref{eq:nu definition existence} that

\begin{equation}\label{eq:convergence bnk to bn un a.s.}
\Pm\left(\tilde{b}^{n,m}(u^n(t,x))\ra \tilde{b}^n(u^n(t,x))\quad\text{for Lebesgue almost-every $(t,x)$}\right)=1.
\end{equation}
Then by following the proof in the Baire class-1 case and using \eqref{eq:convergence bnk to bn un a.s.}, it follows that $u^{n,m}\ra u^n$ locally uniformly in distribution as $m\ra\infty$.

Observe that $b^n\ra b$ $\nu$-almost surely.
This is why we need pointwise convergence of $b^n\ra b$ rather than a.e.\ convergence for some measure: a-priori we don't know anything about $\nu$ so this is the only way to guarantee $\nu$-almost-everywhere convergence of $b^n$ to $b$, since the argument constructs $\nu$ from $(b^n)$ and $b$.

Then by diagonalisation, we obtain a subsequence of drifts, $(b^{n,m_n}:n\in \mathbb{N})$, and corresponding solutions $u^{n,m_n}$ such that the former converges $\nu$-almost everywhere to $b$, and the latter converges locally uniformly in distribution to $u$. Finally, we observe as in \eqref{eq:convergence bnk to bn un a.s.} that 
\begin{equation*}
\Pm\left(\tilde{b}^{n,m_n}(u(t,x))\ra \tilde{b}^n(u(t,x))\quad\text{for Lebesgue almost-every $(t,x)$}\right)=1,
\end{equation*}
and again follow the proof in the Baire class-1 case to see that $u$ is a solution for drift $b$.
\end{proof}
We recall that Borel measurable functions are precisely the union of all Baire class-$\omega$ functions for countable ordinals $\omega$, and this is clearly preserved by restricting to functions bounded between $-Cu$ and $C(1-u)$. Then having established Lemma \ref{lem:ptwise convergence gives solutions}, the existence of a solution for all Borel-measurable $b$ satisfying $-Cu\leq b(u)\leq C(1-u)$ follows by transfinite induction. \qed

\section*{Acknowledgements and data sharing}

This work was initiated while Jere Koskela was based at the School of Mathematics, Statistics and Physics, Newcastle University, UK.
No new data were created or analysed in this study.
Data sharing is not applicable to this article.

\bibliography{bibliography}{}
\bibliographystyle{abbrvnat}

\end{document}